\documentclass[11pt,twoside,reqno]{amsart}
\usepackage{amsmath, amsfonts, amsthm, amssymb,amscd, graphicx, amscd}
\usepackage{float,epsf}
\usepackage[english]{babel}
\usepackage{enumerate}
\usepackage{tikz}
\usepackage{hyperref}
 \usepackage{float} 
\usepackage{srcltx}
\usepackage{subfigure}
\usepackage{graphics}
\usepackage{epsfig}

%
\usepackage{geometry}
\geometry{verbose,tmargin=2.8cm,bmargin=2.8cm,lmargin=2.8cm,rmargin=2.8cm}
\usepackage{verbatim}
\usepackage{mathrsfs}




 
\topmargin 0.1in \oddsidemargin 0.1in \evensidemargin 0.1 in
\textwidth 6 in \textheight 8.5in

\newtheorem{theorem}{Theorem}[section]
\newtheorem{lemma}[theorem]{Lemma}

\newtheorem{proposition}[theorem]{Proposition}
\newtheorem{corollary}[theorem]{Corollary}
\newtheorem{remark}[theorem]{Remark}

\numberwithin{equation}{section}
\numberwithin{figure}{section}





\makeatletter
\@namedef{subjclassname@2020}{Mathematics Subject Classification 2020}
\makeatother



\renewcommand{\div}{\mathrm{div}} 



\def\intave#1{\int_{#1}\hbox{\llap{$\raise2.3pt\hbox{\vrule
height.9pt width7pt}\phantom{\scriptstyle{#1}}\mkern-2mu$}}}

\everymath{\displaystyle}

\title{Stability analysis on the thermal insulation problems}
\usepackage[misc]{ifsym}

\author[Y. Huang]{Yong Huang}
\address{School of Mathematics, Hunan University, Changsha, Hunan, China.}
\email{huangyong@hnu.edu.cn}

\author[Q. Li]{Qinfeng Li
}
\email{liqinfeng1989@gmail.com}

\author[Q. Li]{Qiuqi Li }
\email{qli28@hnu.edu.cn}

\thanks{Research of Yong Huang was supported by the National Science Fund for Distinguished Young Scholars (No. 11625103), Tian Yuan Special Foundation (No. 11926317) and Hunan Science and Technology Planning Project(No. 2019RS3016). Both researches of Qinfeng Li and Qiuqi Li were supported by the Fundamental Research Funds for the Central Universities, Hunan Provincial Key Laboratory of intelligent information processing and Applied Mathematics.}

\subjclass[2020]{49K20, 49K40, 49R05}
\keywords{Shape Derivative, Symmetry Breaking, Laplacian Eigenvalue}

\begin{document}

\begin{abstract}  
 Based on the domain variational point of view, we carry on stability analysis
on two shape optimization problems from thermal insulation background. The novelty is
that, we do not require that the second variation is normal to the boundary. For example,
translation variation is not normal, but as one can see in our work, it not only plays a
role in obtaining the necessary and sufficient condition for stability of ball shape in the
first problem when heat source is radial, but also is essential in deriving the precise value
of symmetry breaking threshold of insulation material in the second problem, which turns
out to be related to isoperimetric constant and in turn implies that ball shapes are stable
in two dimensions.

\end{abstract}

\maketitle

\vskip0.2in

\section{Introduction}
In this paper, we consider two thermal insulation problems, both with the goal of finding the optimal distribution of insulation materials around the boundary of thermal bodies, and then designing optimal shapes, for the purpose of maximizing averaged temperature, or minimizing temperature decay. Let $\Omega \subset \mathbb{R}^n$ be a thermally conducting body, and 
\[
\Sigma_\epsilon:=\{\sigma+t\nu(\sigma)|\sigma\in\partial\Omega\,, 0\le t \le \epsilon h(\sigma)\}
\]
be the layer with thickness of order $\epsilon$, where $h$ is the distribution, after $\epsilon$-normalization, of the insulating material around $\partial \Omega$ with total amount $m>0$, that is,  $$h \in \mathcal{H}_m(\Omega):=\Big\{h \ge 0: \int_{\partial \Omega} h(\sigma) d\sigma=m\Big\}.$$

In the first problem of maximizing averaged temperature as shown in Bucur-Buttazzo-Nitsch\cite{BBN1}, there is a heat source $f \in L^2(\Omega)$, and we also assume that outside $\Omega \cup \Sigma_{\epsilon}$, the temperature is $0$. Hence the long-time temperature $u_h$ inside $\Omega\cup \Sigma_{\epsilon}$, depending on $h$, satisfies
\begin{align*}
    \begin{cases}
    -\Delta u=f\quad &\mbox{in $\Omega$}\\
    -\Delta u=0 \quad &\mbox{in $\Sigma_{\epsilon}$}\\
    u=0\quad &\mbox{on $\partial (\Omega \cup \Sigma_{\epsilon})$}\\
    \frac{\partial u^-}{\partial \nu}=\delta \frac{\partial u^+}{\partial \nu}\quad &\mbox{on $\partial \Omega$, $\delta$ is the conductivity coefficient.}
    \end{cases}
\end{align*}
Equivalently, this solution $u_h$ can be also seen as the solution to the minimization problem among $H^1_0(\Omega \cup \Sigma_{\epsilon})$ class for the energy functional  
\begin{align}
\label{rawenergy}
    E_{\epsilon,\delta,h,\Omega}(u)=\frac{1}{2}\int_{\Omega}|\nabla u|^2dx+\frac{\delta}{2}\int_{\Sigma_\epsilon} |\nabla u|^2dx-\int_{\Omega} fudx.
\end{align}
After integration by parts, the infimum above is given by $-\frac{1}{2}\int_{\Omega}fu_h\, dx$. When $\Omega$ and $f$ are fixed, we would like to find $h \in \mathcal{H}_m(\Omega)$ such that
$
\int_{\Omega}fu_h\, dx
$
is maximized. Hence this is equivalent to minimizing the above energy \eqref{rawenergy}, and thus the first problem is also called the \textit{energy problem}. When $\epsilon<<\delta \rightarrow 0$, we may neglect the insulator, and when $\delta<<\epsilon\rightarrow 0$, there is no heat transmission through $\partial \Omega$. Hence we do not consider these two special cases. For the case $\delta$ is comparable to $\epsilon$, mathematically we may assume that $\delta=\epsilon$, and then due to the work of Acerbi-Buttazzo\cite{AG}, when $\delta=\epsilon\rightarrow 0$, $E_{\epsilon,\delta, m, \Omega}$ $\Gamma$-converges to $$J_m(u,h,\Omega):=\frac{1}{2}\int_{\Omega}|\nabla u|^2dx+\frac{1}{2}\int_{\partial \Omega} \frac{u^2}{h} d\sigma-\int_{\Omega} fudx.$$ 
Then, by H\"older's inequality, see Buttazzo\cite{Buttazzo}, or \cite{BBN} for details, the optimal distribution $h_m$ is given by 
\begin{align}
\label{material}
    h_m(\sigma)=\frac{m|u_{\Omega}(\sigma)|}{\int_{\partial \Omega}|u_{\Omega}|d\sigma},
\end{align}where $u_{\Omega}$ is a minimizer to
\begin{align}
\label{energy1}
J_m(u,\Omega):=\frac{1}{2}\int_{\Omega} |\nabla u|^2dx+\frac{1}{2m}\left(\int_{\partial \Omega} |u| d\mathcal{H}^{n-1}\right)^2-\int_{\Omega} fu dx.
\end{align}
Such $u_{\Omega}$ is unique if $\Omega$ is connected, as shown in \cite{BBN}. 

It is then natural to allow $\Omega$ to vary, and we consider following shape functional in finding minimizers to $E_m(\cdot)$ under volume constraints, for the purpose of designing optimal shapes for maximizing averaged temperature.
\begin{align}
    \label{energyfunctional}
E_m(\Omega):=\inf\Big\{\frac{1}{2}\int_{\Omega}|\nabla u|^2 dx+\frac{1}{2m}\left(\int_{\partial \Omega}|u|d\sigma\right)^2-\int_{\Omega}fudx: u \in H^1(\Omega)\Big\}.
\end{align} 
When $f \equiv 1$, according to Della Pietra-Nitsch-Scala-Trombetti\cite{PNST}, ball is the unique minimizer for any $m>0$. However, for general radial heat source $f$, there is an interesting isoperimetric problem: among all bodies with prescribing volume, is ball also a minimizer to $E_m(\cdot)$ in \eqref{energyfunctional}?

Motivated from the problem, we derive the following stability, instability and stability breaking results under various conditions on $f$.
\begin{theorem}
\label{stabilityintro}
Let $f\ge 0$ be a smooth radial function, and $B_R \subset \mathbb{R}^n$ be the Euclidean ball of radius $R$ centered at origin, then we consider the following three cases.
\begin{itemize}
\item[(1)] If $f$ is nondecreasing along the radius and $f$ is not a constant function, then  $B_R$ is not a stable solution to $E_m(\cdot)$ for any $m>0$. 
\item[(2)] If $f$ is nonincreasing along the radius and further satisfies \begin{align}
\label{conditionforf}
    f(R) \ge \frac{n-1}{n}\frac{\int_{B_R}f(x)dx}{|B_R|},
\end{align}where $f(R)$ is the value of $f$ on $\partial B_R$, then $B_R$ is a stable solution to $E_m(\cdot)$ for any $m>0$.
\item[(3)] If $f'(R)<0$ and $f$ does not satisfy \eqref{conditionforf}, then there exists $m_1>0$ such that when $m<m_1$, $B_R$ is not a stable solution to $E_m(\cdot)$, and when $m>m_1$ $B_R$ is a stable solution to $E_m(\cdot)$.
\end{itemize}
\end{theorem}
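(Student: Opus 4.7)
The plan is to reduce the stability question at $\Omega=B_R$ to the sign of the second shape derivative of $E_m$ and to decouple the resulting quadratic form via spherical harmonics. I would use volume-preserving one-parameter flows $\Phi_t=\mathrm{id}+tX+\tfrac{t^2}{2}Y+o(t^2)$; since $f$ is radial the minimizer $u_{B_R}$ of \eqref{energy1} is also radial and $B_R$ is automatically critical, so stability is the non-negativity of $Q(V):=\tfrac{d^2}{dt^2}|_{t=0} E_m(\Omega_t)$, where $V=X\cdot\nu$ on $\partial B_R$ and the volume constraint reads $\int_{\partial B_R} V\,d\sigma=0$. Because all radial data on $\partial B_R$ are constants, expanding $V=\sum_{k\ge 1, j}a_{k,j}Y_{k,j}$ in spherical harmonics on $R\mathbb{S}^{n-1}$ diagonalizes $Q$ as $\sum_{k\ge 1}\mu_k(R,m,f)\sum_j a_{k,j}^2$. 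For $k\ge 2$ the eigenvalue $k(k+n-2)\ge 2n$ of the spherical Laplacian forces $\mu_k>0$ under all three hypotheses of the theorem, so everything is governed by the translation mode $k=1$; the novelty emphasized in the abstract is that the corresponding $X$ is tangential (not normal) on $\partial B_R$ and still must be accounted for.

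For case (1) I expect the instability to come from an explicit competitor rather than from the sharp formula for $\mu_1$. By radial symmetry $v\mapsto E_m(B_R+v)$ depends only on $|v|$, and using the translated radial profile $u_{B_R}(\cdot-v)$ as competitor on $B_R+v$ gives
\begin{equation*}
E_m(B_R+v)-E_m(B_R)\le -\int_{B_R}\bigl[f(x+v)-f(x)\bigr]u_{B_R}(x)\,dx,
\end{equation*}
whose Taylor expansion in $v$ reduces, by isotropy, to $-\tfrac{|v|^2}{2n}\int_{B_R}\Delta f\cdot u_{B_R}\,dx+O(|v|^4)$. Using $-\Delta u_{B_R}=f$ in $B_R$ together with the Robin condition $\partial_\nu u_{B_R}+\tfrac{1}{m}\int_{\partial B_R}u_{B_R}\,d\sigma=0$ on $\partial B_R$ coming from the Euler--Lagrange equation of \eqref{energy1}, Green's identity gives
\begin{equation*}
\int_{B_R}\Delta f\cdot u_{B_R}\,dx=\int_{B_R}\bigl(f(R)-f\bigr)f\,dx+f'(R)\,u_{B_R}(R)\,|\partial B_R|.
\end{equation*}
When $f$ is nondecreasing and non-constant, both summands on the right are non-negative and at least one is strictly positive, so $E_m(B_R+v)<E_m(B_R)$ for small $v\ne 0$.

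For cases (2) and (3) the envelope upper bound is in the wrong direction, and I would instead carry out the full second-order expansion of the minimizer $u_{B_R+v}$ about $v=0$, reconciling the breaking of radial symmetry with the nonlocal Robin boundary condition, in order to obtain the precise coefficient $\mu_1$. I expect it to take the form
\begin{equation*}
\mu_1=\alpha(R)\Bigl(f(R)-\frac{n-1}{n}\frac{1}{|B_R|}\int_{B_R}f\,dx\Bigr)+\beta(R,f)\,\Psi(m),
\end{equation*}
with $\Psi(m)$ a monotone function coming from the nonlocal term and with $\beta$ of the favorable sign. Case (2) then follows because the first summand is non-negative by hypothesis \eqref{conditionforf} and the correction pushes $\mu_1$ upward; case (3) follows because under its hypotheses the first summand is negative while the correction dominates as $m$ grows, giving a unique threshold $m_1>0$ that separates the instability and stability ranges. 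The main obstacle will be this sharp second-order expansion: the nonlocal boundary functional couples the first and second shape derivatives of $u_{B_R+v}$ across $\partial B_R$, and it is precisely by tracking those cancellations that one recovers the isoperimetric-type constant $\tfrac{n-1}{n}$ in \eqref{conditionforf} and identifies the explicit symmetry-breaking threshold $m_1$.
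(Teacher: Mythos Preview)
Your competitor argument for case~(1) is correct and is a genuinely different route from the paper. The paper never uses an energy comparison; instead it computes the full second variation (Lemma~\ref{secondvariation}) and shows, via the second Steklov inequality $\int_{\partial B_R}v^2\,d\sigma\le R\int_{B_R}|\nabla v|^2\,dx$ applied to the auxiliary harmonic function $v$, that the infimum of the quadratic form over all volume-preserving flows is attained exactly on translations (Corollary~\ref{worstcase}). This produces the sharp necessary-and-sufficient criterion of Theorem~\ref{classification}: $B_R$ is stable iff
\[
\Bigl(f(R)-\tfrac{n-1}{n}\bar f_R\Bigr)\bigl(f(R)-\bar f_R\bigr)+f'(R)\,\bar f_R\,\frac{m}{n^2\omega_n R^{n-1}}\le 0,
\]
with $\bar f_R=|B_R|^{-1}\int_{B_R}f$, and all three cases are then read off from this single scalar. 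Your envelope bound is shorter for (1), but since it only yields an upper bound on $E_m(B_R+v)-E_m(B_R)$ it cannot address the stability assertions in (2) and (3).

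For (2) and (3) your plan has two concrete gaps. First, the claim that $\mu_k>0$ for every $k\ge 2$ ``under all three hypotheses'' is unproven and in fact false: in case~(3) one has $u_{rr}(R)=\tfrac{n-1}{n}\bar f_R-f(R)>0$, and a direct computation gives $\mu_2\big|_{m\to 0}=-\tfrac{R}{2}u_{rr}\bigl(u_{rr}+\tfrac{2}{n}\bar f_R\bigr)<0$. What is actually true, and what the paper's Steklov argument encodes, is the weaker statement $\mu_k>\mu_1$ for $k\ge 2$, so that stability is equivalent to $\mu_1\ge 0$. Second, your guessed form for $\mu_1$ is wrong: the $m$-independent part is not linear in $f(R)-\tfrac{n-1}{n}\bar f_R$ but the \emph{product} $-R\bigl(f(R)-\tfrac{n-1}{n}\bar f_R\bigr)\bigl(f(R)-\bar f_R\bigr)$. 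The second factor is essential. In case~(2) it is $\le 0$ by monotonicity of $f$, and combined with \eqref{conditionforf} the two factors have opposite signs, giving $\mu_1\ge 0$; in case~(3) both factors are strictly negative so the product is positive, while the $m$-coefficient of $\mu_1$ equals $-Rf'(R)\bar f_R/(n^2\omega_n R^{n-1})>0$, yielding a unique crossing $m_1$. Your spherical-harmonic route would recover exactly this expression once you solve for $v$ on the $k=1$ mode, but as written the computation is not carried out and the structural ansatz does not match.
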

The existence of minimizer to \eqref{energyfunctional} is not known, due to lack of uniform perimeter estimates for minimizing sequences. A partial result is given by Du-Li-Wang\cite{DLW}, where they study the existence in the framework of uniform domains, which are roughly extension domains with some uniform parameters. We refer the reader to the two significant papers Gehring-Osgood\cite{GO} and Jones\cite{Jones} for properties of extension domains and uniform domains.

It is easy to see that for any $m>0$ and any such $f$, $B_R$ is always a stationary solution to $E_m(\cdot)$, i.e.
\begin{align}
    \label{stationarycondition}
\frac{d}{dt}\Big|_{t=0}E_m(F_t(\Omega))=0,  
\end{align}for any volume-preserving flow map $F_t$. see Proposition \ref{firstderivative'} below. To clarify the stability analysis, we also derive the second variation formula for $E_m(\cdot)$ at ball shape, and by stability of a domain $\Omega$ we mean that
\begin{align}
    \label{stablecondition}
\frac{d^2}{dt^2}\Big|_{t=0}E_m(F_t(\Omega))\ge 0.
\end{align}
The computation of the second variation is much more complicated since we do not require normal variation. Then by relating to Stekl\"off eigenvalue problem, sphere harmonics and construction of translation variation, we obtain necessary and sufficient conditions on $f$ such that ball shapes are stable to the energy functional, see Theorem~\ref{classification}, and then as a consequence of which we derived Theorem \ref{stabilityintro}. The method in computing second variation of $E_m(\cdot)$ at ball shape is based on some derivative formulas in flow language derived in section 2, which are also used to study the second problem. 


In the second problem of minimizing temperature decay, the domain $\Omega$ is as above, while with a fixed initial temperature $u_0>0$ and there is no heat source. In this case the temperature decays to zero and our goal is to put the insulating material around $\Omega$ in order for the temperature decay rate to be as low as possible. By the Fourier analysis of the following corresponding heat diffusion equations
\begin{align*}
    \begin{cases}
    u_t-\Delta u=0\quad &\mbox{in $\Omega$}\\
    u_t-\Delta u=0 \quad &\mbox{in $\Sigma_{\epsilon}$}\\
    u=0\quad &\mbox{on $\partial (\Omega \cup \Sigma_{\epsilon})$}\\
    \frac{\partial u^-}{\partial \nu}=\delta \frac{\partial u^+}{\partial \nu}\quad &\mbox{on $\partial \Omega$, $\delta$ is the conductivity coefficient}
    \end{cases}
\end{align*}
the decay of the temperature goes as $e^{-t\lambda}$, where $\lambda$ is the first eigenvalue of the following elliptic operator written as
    $$<\mathcal{A}u,\phi>:=\int_{\Omega}\nabla u \nabla \phi\, dx+\delta \int_{\Sigma_{\epsilon}}\nabla u\nabla \phi\, dx.$$
Hence the second thermal insulation problem is also called the \textit{eigenvalue problem}. Similar to the case of the first problem, eventually we need to consider the following minimization problem
\begin{align}
    \label{lambdam'}
\lambda_m(\Omega):=\inf\Big\{\frac{\int_{\Omega}|\nabla u|^2dx+\frac{1}{m}\left(\int_{\partial \Omega}|u|d\sigma\right)^2}{\int_{\Omega}u^2dx}:u\in H^1(\Omega)\Big\}.
\end{align}
Again as before, if $u_{\Omega}$ is the function where the infimum in \eqref{lambdam'} is attained, then the distribution function of insulation material is still given by \eqref{material}. Different from the first problem, the following theorem due to Bucur-Buttazzo-Nitsch shows that that symmetry breaking occurs in the second problem, when the domain is a ball.  
\begin{theorem}\emph{(\cite[Theorem 3.1]{BBN})}
\label{gongming}
Let $\Omega$ be a ball, then there exists a unique number $m_0>0$ such that $\lambda_{m_0}(\Omega)=\mu_2(\Omega)$, the first nonzero eigenvalue of Neumann Laplacian on the ball. Also, when $m>m_0$, the optimal distribution is uniform along the boundary, while when $m<m_0$, the optimal distribution of material is not uniform.
\end{theorem}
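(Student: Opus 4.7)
The plan is to reduce the problem on $B_R$ to a spherical-harmonic decomposition and then close the argument with a perturbation. First, if $u$ is radial then $\int_{\partial B_R}|u|\,d\sigma=|u(R)||\partial B_R|$, so the boundary penalty in \eqref{lambdam'} reduces to $(|\partial B_R|/m)\int_{\partial B_R}u^2\,d\sigma$, and the restricted infimum $\lambda^{\mathrm{rad}}_m(B_R)$ coincides with the first Robin eigenvalue of $-\Delta$ on $B_R$ with parameter $\beta_m:=|\partial B_R|/m$. Standard monotonicity of the Robin eigenvalue in $\beta$ yields that $m\mapsto\lambda^{\mathrm{rad}}_m(B_R)$ is continuous and strictly decreasing, with $\lambda^{\mathrm{rad}}_m\to\lambda_1^{\mathrm{Dir}}(B_R)$ as $m\to 0^+$ and $\lambda^{\mathrm{rad}}_m\to 0$ as $m\to\infty$. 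Since $\mu_2(B_R)\in(0,\lambda_1^{\mathrm{Dir}}(B_R))$, there is a unique $m_0>0$ with $\lambda^{\mathrm{rad}}_{m_0}=\mu_2(B_R)$, and automatically $\lambda_m(B_R)\le\lambda^{\mathrm{rad}}_m(B_R)$.

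For a matching lower bound I would use $|\int_{\partial}u\,d\sigma|\le\int_{\partial}|u|\,d\sigma$ to pass to the auxiliary $\tilde\lambda_m(B_R):=\inf_u\frac{\int|\nabla u|^2+m^{-1}(\int_{\partial}u\,d\sigma)^2}{\int u^2\,dx}$, and expand $u=\sum_{k\ge 0}u_k(r)Y_k(\theta)$ in $L^2$-orthonormal spherical harmonics. Both $\int u^2$ and $\int|\nabla u|^2$ decouple as sums over $k$, while $\int_{\partial}u\,d\sigma$ sees only the $k=0$ mode. Hence $\tilde\lambda_m(B_R)=\min_k\Lambda_m^{(k)}$, where $\Lambda_m^{(0)}=\lambda^{\mathrm{rad}}_m$ and, for $k\ge 1$, $\Lambda_m^{(k)}$ is the first eigenvalue in the $k$-th angular sector of the Neumann Laplacian on $B_R$; these are nondecreasing in $k$ with $\Lambda_m^{(1)}=\mu_2(B_R)$, realized by $\phi=x_i g(r)$. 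Therefore $\lambda_m(B_R)\ge\min(\lambda^{\mathrm{rad}}_m(B_R),\mu_2(B_R))$.

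For $m\ge m_0$ the upper and lower bounds pinch $\lambda_m(B_R)=\lambda^{\mathrm{rad}}_m(B_R)$; replacing $u$ by $|u|$ and using the Euler--Lagrange condition $\partial_\nu u=-m^{-1}\int_{\partial}u\,d\sigma$ (constant on $\partial B_R$), together with uniqueness for this Robin problem when $\lambda_m$ is not a Neumann eigenvalue, forces the minimizer to be radial, so $h_m$ in \eqref{material} is constant. For $m<m_0$ we have $\lambda^{\mathrm{rad}}_m>\mu_2$; I would then test $u^{\mathrm{rad}}+\epsilon\phi$ with $\phi=x_1 g(r)$ the Neumann eigenfunction for $\mu_2$. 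Angular orthogonality kills every cross term in the numerator and denominator, and for small $\epsilon>0$ the perturbation stays positive on $\partial B_R$, so $(\int_{\partial}|u^{\mathrm{rad}}+\epsilon\phi|\,d\sigma)^2$ is frozen at $(u^{\mathrm{rad}}(R)|\partial B_R|)^2$. A short calculation gives the Rayleigh quotient $(\lambda^{\mathrm{rad}}_m A+\epsilon^2\mu_2 B)/(A+\epsilon^2 B)$ with $A,B>0$, strictly less than $\lambda^{\mathrm{rad}}_m$ since $\mu_2<\lambda^{\mathrm{rad}}_m$. Thus $\lambda_m(B_R)<\lambda^{\mathrm{rad}}_m(B_R)$ and $h_m$ is non-uniform. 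Continuity and strict monotonicity of $\lambda_m(B_R)$ in $m$ then pin $m_0$ as the unique value with $\lambda_{m_0}(B_R)=\mu_2(B_R)$.

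The main obstacle I anticipate is the modal decomposition: one must justify the decoupling of $\tilde\lambda_m$ across spherical-harmonic sectors and identify $\Lambda_m^{(1)}=\mu_2(B_R)$ as the minimum over $k\ge 1$. A secondary subtlety is ensuring that the Euler--Lagrange argument for $m\ge m_0$ really forces radial symmetry rather than merely radial level sets, which is resolved by Hopf's lemma together with the constant--normal--derivative boundary condition.
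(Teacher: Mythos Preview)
The paper does not give its own proof of Theorem~\ref{gongming}: the result is quoted from \cite[Theorem~3.1]{BBN} and used only as background for the subsequent shape-derivative analysis. So there is no in-paper argument to compare your proposal against.

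That said, your outline is sound and is essentially the standard route. The reduction of the radial problem to a Robin eigenvalue with parameter $\beta_m=|\partial B_R|/m$, the monotonicity in $m$, and the spherical-harmonic decoupling of the auxiliary quotient $\tilde\lambda_m$ are all correct: the boundary term $\int_{\partial B_R}u\,d\sigma$ sees only the radial mode, and the infimum over each angular sector $k\ge 1$ is the first Neumann eigenvalue in that sector, the smallest of which (at $k=1$) is $\mu_2(B_R)$. This indeed gives $\lambda_m(B_R)\ge\min(\lambda_m^{\mathrm{rad}},\mu_2)$, and the test function $u^{\mathrm{rad}}+\epsilon\phi$ handles $m<m_0$ exactly as you say.

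Two steps deserve a sharper justification. For $m>m_0$, rather than appealing to Hopf's lemma, the clean way to force radiality is to split any nonnegative minimizer as $u=u_0+u_\perp$ with $u_0$ radial; the Euler--Lagrange system then gives $-\Delta u_\perp=\lambda_m u_\perp$ in $B_R$ with $\partial_\nu u_\perp=0$, and since $0<\lambda_m=\lambda_m^{\mathrm{rad}}<\mu_2$ one concludes $u_\perp=0$. (At $m=m_0$ this argument breaks down because $\lambda_{m_0}=\mu_2$ is a Neumann eigenvalue, so your ``$m\ge m_0$'' should read ``$m>m_0$'', in agreement with the statement.) For $m<m_0$, non-radiality of the minimizer does not by itself yield non-constant $|u|$ on $\partial B_R$; you should add that if both $u$ and $\partial_\nu u$ were constant on the sphere, then unique continuation for the Helmholtz equation would force $u$ to be radial, contradicting $\lambda_m<\lambda_m^{\mathrm{rad}}$. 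With these two points made explicit, the argument is complete.
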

However, this theorem does not indicate the precise value of $m_0$. By deriving the second shape derivative of $\lambda_m(\cdot)$, and using translation invariant property, it turns out that the precise value of $m_0$ satisfies the following formula involving isoperimetric constant.
\begin{theorem}
\label{asympintro}
Let  $P(B_R)$ and  $|B_R|$ denote the perimeter and the volume of $B_R$ respectively, then we have
 \begin{align}
\label{buxiedai}
    m_0\mu_2(B_R)=\frac{n-1}{n}\frac{P^2(B_R)}{|B_R|}\, (=2\pi,\,\mbox{when $n=2$.})
\end{align}
\end{theorem}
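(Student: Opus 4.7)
The plan is to combine Theorem~\ref{gongming} with translation invariance to pin down $m_0$. By Theorem~\ref{gongming}, $\lambda_{m_0}(B_R)=\mu_2(B_R)$ and for $m\ge m_0$ the optimal distribution on $B_R$ is uniform, so there is a positive radial minimizer $u_0\in H^1(B_R)$ with $\int_{B_R}u_0^2=1$. Its Euler--Lagrange system reads
\[
-\Delta u_0=\mu_2 u_0\ \text{in }B_R,\qquad \partial_\nu u_0=-\frac{1}{m_0}\int_{\partial B_R}u_0\,d\sigma\ \text{on }\partial B_R,
\]
and writing $c:=u_0|_{\partial B_R}$ (a constant by radial symmetry) gives $u_0'(R)=-cP(B_R)/m_0$.

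The main step is to use translation invariance to force $u_0''(R)=0$. Since $\omega\cdot\nabla$ commutes with $\Delta$, the function $\phi:=\omega\cdot\nabla u_0=u_0'(r)(\hat r\cdot\omega)$ satisfies $-\Delta\phi=\mu_2\phi$ in $B_R$, equivalently $\psi(r):=u_0'(r)$ solves the degree-one radial eigenvalue ODE
\[
\psi''+\frac{n-1}{r}\psi'-\frac{n-1}{r^2}\psi+\mu_2\psi=0
\]
(obtained by differentiating the radial ODE for $u_0$). The Neumann second eigenfunction on $B_R$ has the form $v=R_1(r)(\hat r\cdot\omega)$, where $R_1$ solves the same equation, is regular at the origin, and satisfies the Neumann condition $R_1'(R)=0$. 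Since $u_0'(r)\sim u_0''(0)\,r$ near $r=0$ and the regular-at-origin solution space of this ODE is one-dimensional (indicial roots $1$ and $-(n-1)$), we conclude $u_0'=\kappa R_1$ for some $\kappa\ne0$, and hence $u_0''(R)=\kappa R_1'(R)=0$. This algebraic identity is precisely the content of the vanishing $\frac{d^2}{dt^2}\big|_{t=0}\lambda_m(B_R+t\omega)=0$ coming from translation invariance, once the second shape derivative formula from Section~2 is unpacked on the radial minimizer.

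Finally, evaluating the radial ODE $u_0''(r)+\frac{n-1}{r}u_0'(r)+\mu_2u_0(r)=0$ at $r=R$ with $u_0''(R)=0$, $u_0(R)=c$, $u_0'(R)=-cP(B_R)/m_0$ yields
\[
\frac{n-1}{R}\Big(-\frac{cP(B_R)}{m_0}\Big)+\mu_2 c=0 \ \Longrightarrow\ m_0\mu_2=\frac{(n-1)P(B_R)}{R},
\]
and the relations $P(B_R)=n\omega_n R^{n-1}$, $|B_R|=\omega_n R^n$ convert this to $m_0\mu_2=\frac{n-1}{n}\frac{P(B_R)^2}{|B_R|}$, specializing to $2\pi$ when $n=2$. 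The main obstacle lies in the middle step: one must verify that $\mu_2(B_R)$ is indeed realized by the degree-one spherical harmonic mode (standard for balls in every dimension via Bessel function comparisons) and establish the regularity of $u_0$ up to $\partial B_R$ needed to justify the pointwise boundary identities. A direct shape-derivative route is available but is complicated by having to treat the tangential part of $\omega=(\omega\cdot\nu)\nu+\omega^{\mathrm{tan}}$, which classical normal-variation Hadamard formulas do not capture but which is essential here, exactly as advertised in the abstract.
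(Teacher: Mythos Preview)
Your argument is correct and reaches the identity~\eqref{buxiedai} by a route that is genuinely different from, and more elementary than, the paper's. The paper works with $m>m_0$, sets up the second shape derivative of $\lambda_m$ at $B_R$ (Lemma~\ref{duanqueshengtong}), plugs in a translation flow, and obtains the Bessel-function identity~\eqref{shounuo} valid for all $m>m_0$; sending $m\to m_0^+$ forces the denominator (which is exactly $R_1'(R)$ written in Bessel form) to vanish, hence the numerator $m\lambda_m-\frac{n-1}{R}P(B_R)$ must vanish in the limit. You instead work directly at $m=m_0$: the observation that $\psi=u_0'$ and the degree-one Neumann radial profile $R_1$ solve the same singular ODE with a one-dimensional regular solution space immediately yields $u_0''(R)=\kappa R_1'(R)=0$, and evaluating the radial equation on $\partial B_R$ finishes. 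This bypasses the entire shape-derivative computation. What the paper's longer route buys is the relation~\eqref{shounuo} for every $m>m_0$, which is then reused essentially in Section~6 (it is the statement $f_1=0$ in~\eqref{fs}) to prove stability of the disk; your shortcut gives only the endpoint identity.

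Two small points. First, Theorem~\ref{gongming} as quoted asserts uniformity of the optimal distribution for $m>m_0$, not $m\ge m_0$; to get a positive radial minimizer exactly at $m_0$ you should either pass to the limit $m\to m_0^+$ of the (normalized) radial minimizers, or construct $u_0$ directly as the radial Helmholtz solution and check it realizes $\lambda_{m_0}=\mu_2$. Second, your remark that $u_0''(R)=0$ ``is precisely the content of'' the vanishing translation second variation is heuristic at $m=m_0$, since the shape-derivative framework in the paper is set up only for $m>m_0$ (uniqueness of $u_{\Omega_t}$ is used there); fortunately your ODE argument stands on its own and does not rely on this interpretation.
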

This formula is rather interesting to us, not only because the number $2\pi$ occurs when $n=2$, but also because it gives a new way of seeing where the symmetry breaking of insulating material occurs: when $n=2$, it occurs when $m\lambda_m(B_R)$ exactly reaches half of its full range for $m \in (0,\infty)$, according to the fact that $m\lambda_m$ is an increasing function, and satisfies
\begin{align}
    \label{range}
\lim_{m\rightarrow \infty}m\lambda_m(B_R)=\frac{P^2(B_R)}{|B_R|}\quad \mbox{and}\quad  \lim_{m\rightarrow 0}m\lambda_m(B_R)=0.    
\end{align} 
Another interesting consequence of \eqref{buxiedai} is that, the value of $m$ at which the symmetry of insulating material around $\partial B_R$ is breaking, is in fact proportional to the volume of $B_R$, instead of the perimeter of $B_R$. It would be very interesting to find a more intuitive perspective to understand \eqref{buxiedai}.

\medskip

By  \eqref{buxiedai} of Theorem~\ref{gongming}, Fourier series and properties of Bessel functions, we prove the following stability result.
\begin{theorem}
When $n=2$, $B_R$ is stable to $\lambda_m(\cdot)$ for any $m>m_0$.
\end{theorem}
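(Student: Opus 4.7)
The plan is to compute the second shape derivative of $\lambda_m(\cdot)$ at $B_R$ along volume-preserving flows, decompose it into Fourier modes on $\partial B_R\simeq\mathbb{S}^1$, and verify nonnegativity mode by mode using \eqref{buxiedai}, translation invariance, and Bessel-function identities.

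First, since $m>m_0$, Theorem~\ref{gongming} gives that the optimal insulation distribution on $B_R$ is uniform, and hence the minimizer $u=u_{B_R}$ of \eqref{lambdam'} is radially symmetric. In $\mathbb{R}^2$ it has the explicit form $u(r)=c\,J_0(\sqrt{\lambda_m(B_R)}\,r)$, with $\lambda_m(B_R)<\mu_2(B_R)$ determined by the Robin-type boundary condition $u'(R)+\tfrac{P(B_R)}{m}u(R)=0$ coming from the Euler--Lagrange equations for \eqref{lambdam'}. Using the flow derivative formulas developed in Section~2, I would then derive the quadratic form
\[
Q(X) := \tfrac12\frac{d^2}{dt^2}\Big|_{t=0}\lambda_m(F_t(B_R))
\]
for volume-preserving $F_t$ with velocity field $X$, keeping tangential components present as in the framework emphasized by the authors.

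Second, I would expand the normal component $\zeta=X\cdot\nu$ on $\partial B_R$ in Fourier series, $\zeta(\theta)=\sum_{k\geq 0}(a_k\cos k\theta+b_k\sin k\theta)$; the constraint $\int_{\partial B_R}\zeta\,d\sigma=0$ forces $a_0=0$. By radial symmetry of $u_{B_R}$, $Q$ diagonalizes as
\[
Q(X)=\sum_{k\geq 1}c_k(m,R)\,(a_k^2+b_k^2)+R_{\mathrm{tan}}(X),
\]
with coefficients $c_k(m,R)$ expressible through $\lambda_m(B_R)$, $R$, $m$, and ratios $J_k'/J_k$ evaluated at $\sqrt{\lambda_m(B_R)}\,R$, together with a residual tangential term $R_{\mathrm{tan}}$. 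For $k=1$, choosing $X$ to be a constant vector field (a pure translation of $B_R$) yields $Q(X)=0$ by translation invariance of $\lambda_m$; this pins down the mode-$1$ contribution and shows it is nonnegative --- precisely the role of translation variation highlighted in the introduction.

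Third, for $k\geq 2$ I aim to prove $c_k(m,R)\geq 0$. Monotonicity of $c_k(m,R)$ in $m$ and in $k$ should reduce the problem to the boundary case $c_2(m_0,R)\geq 0$. Here Theorem~\ref{asympintro} gives $m_0\mu_2(B_R)=2\pi$ with $\mu_2(B_R)=(j'_{1,1}/R)^2$ and $J_1'(j'_{1,1})=0$, so the required inequality reduces to an explicit Bessel-ratio statement at $x=j'_{1,1}$, which can be verified using the recurrence $J_k'(x)=J_{k-1}(x)-(k/x)J_k(x)$ together with the classical interlacing of zeros of $J_k$ and $J_k'$. The main obstacle is this last step: the second variation couples a local Dirichlet-to-Neumann-type operator from $-\Delta u=\lambda u$ with the nonlocal boundary functional $\tfrac{1}{m}\bigl(\int_{\partial\Omega}|u|\bigr)^2$, and distilling this into a single clean Bessel inequality per mode, then bootstrapping from mode~$2$ to all $k\geq 2$ via monotonicity, requires careful bookkeeping. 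Translation invariance alone closes only $k=1$; it is the sharp threshold value from Theorem~\ref{asympintro}, and specifically the two-dimensional simplification $m_0\mu_2(B_R)=2\pi$, that locks in the higher-mode analysis.
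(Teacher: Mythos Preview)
Your plan is essentially the paper's: compute the second variation via Lemma~\ref{duanqueshengtong}, diagonalize in Fourier modes on $\partial B_R$, kill mode $k=1$ by translation invariance, and treat $k\ge 2$ with a Bessel inequality. Two corrections are in order.

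First, there is no residual tangential term $R_{\mathrm{tan}}(X)$: once the computation of Section~5 is carried through, the second variation at $B_R$ depends only on $\zeta=\eta\cdot\nu$ (through the auxiliary solution $w$ of \eqref{shengri}), so the quadratic form is genuinely a sum over modes $s\ge 1$.

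Second, your route for $k\ge 2$ is a detour. You propose monotonicity in \emph{both} $m$ and $k$ in order to reduce to the boundary case $c_2(m_0,R)\ge 0$, and then to verify a Bessel inequality at $j_{1,1}'$. But monotonicity in $m$ is neither proved nor needed, and at $m=m_0$ one has $\sqrt{\lambda_{m_0}}\,R=j_{1,1}'$, where $J_1'$ vanishes and the mode formulas degenerate. The paper avoids all of this: translation invariance gives $c_1(m,R)=0$ for \emph{every} $m>m_0$ (this is exactly the identity \eqref{shounuo} when $n=2$), so one only needs monotonicity in $k$ at fixed $m$. Concretely, substituting the explicit $w$ into \eqref{eigensecondvariation} yields a negative prefactor times $\sum_{s\ge 1} f_s(c_s^2+d_s^2)$ with
\[
f_s=(m\lambda_m-2\pi)\,\frac{J_s(\sqrt{\lambda_m}R)}{\sqrt{\lambda_m}R\,J_s'(\sqrt{\lambda_m}R)}-2\pi,\qquad f_1=0.
\]
Since $m\lambda_m-2\pi>0$ for $m>m_0$ by Theorem~\ref{asympintro}, and since $s\mapsto J_s(t)/(tJ_s'(t))$ is strictly decreasing on $(0,j_{1,1}')$ (Lemma~\ref{dandiaoxing}, via Landau's bound $t\,\partial_s(J_s/J_{s+1})\ge 2$), one gets $f_s<f_1=0$ for $s\ge 2$; the remaining block $\int(|\nabla_{\partial B_R}\zeta|^2-R^{-2}\zeta^2)$ is nonnegative by the spectrum of $-\Delta_{\partial B_R}$. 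No passage to $m_0$ and no monotonicity in $m$ are required.
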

We point out that \eqref{buxiedai} is crucial in the proof of the theorem since $m\lambda_m(B_R)-2\pi$ is a factor in the second shape derivative formula at $B_R$. The threshold $m_0$ turns out to be the the stationarity breaking threshold of ball shape in dimension $2$. In fact, as shown in \cite[Theorem 4.1]{BBN}, when $n=2$ and $m<m_0$, any disk cannot be a stationary shape to \eqref{lambdam'} among regions with fixed area. Bucur-Buttazzo-Nitsch conjecture that the disk should be a minimzer when $m>m_0$, and the conjecture remains open so far. Our theorem above says that $B_R$ is a local minimizer to $\lambda_m(\cdot)$, which supports Buttazzo-Bucur-Nitsch's conjecture.  It is noted that when $n \ge 3$, ball is not a minimizer to $\lambda_m(\cdot)$ for any $m>0$.

Last, we remark that when $f \equiv 1$, the first problem is equivalent to solving
\begin{align}
    \label{q1'}
\inf\Big\{\frac{\int_{\Omega}|\nabla u|^2dx+\frac{1}{m}\left(\int_{\partial \Omega}|u|d\sigma\right)^2}{\left(\int_{\Omega}udx\right)^2}:u\in H^1(\Omega)\Big\}.
\end{align}Hence both of the two thermal insulation problems have some similarities with the problem of minimizing eigenvalues of Robin Laplacian. In this direction, we refer to Bucur-Daners\cite{BD10}, Bucur-Giacomini\cite{BG10}-\cite{BG15}, Daners\cite{Daners00}-\cite{Daners06}, etc, for existence and uniqueness results for minimizers among sets with fixed volume. Readers can also see Brasco-De Philippis-Ruffini\cite{BPR12}, Brasco-De Philippis-Velichkov\cite{BPV15},  Fusco-Zhang\cite{FZ17}, Bucur-Giacomini-Nahon\cite{BGN20}, etc, for quantitative estimates in similar problems.

The paper is organized as follows. In section 2, we derive the formulas of some geometric evolution equations on hypersurfaces along smooth flows which may not be perpendicular to the hypersurfaces. Based on the formulas, in section 3 we derive the first domain variation formula for $E_m(\cdot)$. We also derive the second domain variation formula for $E_m(\cdot)$ at ball shape. Then in section 4, based on the second variation formula, we give a necessary and sufficient condition on $f$ such that ball configurations are stable in the first problem, and then we obtain Theorem \ref{stabilityintro}. In section 5, we study the second problem by deriving the first and second derivatives of $\lambda_m(\cdot)$, and then we prove \eqref{buxiedai} and its consequences. In section 6, we prove the stability of ball to $\lambda_m(\cdot)$ for any $m>m_0$ in two dimensions.

\section{Geometric evolution equations on arbitrary flows}

  Geometric evolution equations along normal flows can be found in many references, for example  Huisken-Polden\cite{HP}. When we do first and second variations of integrals over varying domains along normal flows, those formulas are very helpful, as seen from the computations in \cite{DLW}. However, sometimes we need to consider variations along other directions, and then we need to develop general formulas. The computations in this paper partially borrow from the ideas presented in the very inspiring monograph Henrot-Pierre\cite{Henrot}, where the shape derivatives of some classical functionals are computed for the deformation map $F_t(x)=x+t\eta(x)$, where $\eta$ is any smooth vector field. We have also learned from Bucur-Giacomini\cite{BG} the very well written paper by Bandle-Wagner\cite{BW}, where second domain variations for problems with Robin boundary conditions are computed. The volume-preserving map chosen in \cite{BW} is $F_t(x)=x+tV(x)+\frac{t^2}{2}W(x)+o(t^2)$, where both $V$ and $W$ are smooth vector fields. The use of flow map in our work does not lose generality, since we do not require the flow to be perpendicular to the boundary of the domain under consideration. Moreover, the advantage is that the evolution equation formulas stated in the flow language look much simpler, and are similar to those often used in the study of geometric evolution equations along normal flows in Huisken-Polden\cite{HP}.

In this section, we will derive the evolution equations of the $(n-1)$-volume, normal speed and mean curvature of a hypersurface in $\mathbb{R}^n$ along arbitrary flows which may not be perpendicular to the hypersurface. These formulas should have been known, but since we haven not found in literature the corresponding formulas with respect to flows along other directions, we will present the formulas with detailed proofs. 

We first stipulate some notations. Given a vector field $\eta \in C^{2}_0(\mathbb{R}^n,\mathbb{R}^n)$ and a hypersurface $M \subset \mathbb{R}^n$, using local coordinates $(x^1,\cdots, x^{n-1})$ of $M$, the tangential gradient of $\eta$ along $M$ is defined as an $n\times n$ matrix
\begin{align}
\label{tangentialgradient}
    \nabla^M \eta =g^{ij}\partial_i \eta \otimes \partial_jF,
\end{align}where $F$ is the position vector for points on $M$, $\partial_iF=\frac{\partial F}{\partial x^i}$, $g_{ij}=<\partial_i F, \partial_jF>$, $g^{ij}$ is the inverse matrix of $g_{ij}$ and $\partial_i \eta=(\nabla \eta)\partial_i F$. Then the tangential divergence of $\eta$ is defined as the trace of $\nabla^M \eta$, that is,
\begin{align}
\label{tangentialdivergence}
    \div_{M}\eta=g^{ij}\partial_i\eta \cdot \partial_jF,
\end{align}where $\cdot$ is the inner product in $\mathbb{R}^n$ and repeated indices means summations. Sometimes we also use $<\cdot,\cdot>$ to denote inner product in $\mathbb{R}^n$. We also adopt the convention that given a function $f$ defined in $\mathbb{R}^n$, $\nabla_i\nabla_jf$ denotes $\nabla^2 f(\partial_i F,\partial_j F)$, where $\nabla^2 f$ is the Hessian of $f$ on $M$, and $\partial_i\partial_j f$ denotes usual derivatives of $f$ first along $\partial_j F$ and then along $\partial_i F$.

We call $F_t(x):=F(t,x)$ is the \textit{flow map} generated by the $C^2$ vector field $\eta$, if 
\begin{align*}
    \frac{\partial F(t,x)}{\partial t}=\eta \circ F(t,x);\quad F(0,x)=x.
\end{align*}
Hence $F_t$ is a $C^1$ family of $C^2$ diffeomorphism, when $|t|$ is small.


The following proposition collects some derivative formulas used later.
\begin{proposition}
\label{geometricevolution}
Let $F_t(x):=F(t,x)$ be the flow map generated by a smooth vector field $\eta$, $M_t=F_t(M)$, $\sigma_t$ be the volume element of $M_t$, $\nu(t)$ be the unit normal field along $M_t$ and $h(t)$ be the second fundamental form of $M_t$,  then we have
\begin{align}
    \label{evolutionofarea2}
\frac{d}{dt}d\sigma_t=(\div_{M_t} \eta )d\sigma_t,
\end{align}
\begin{align}
    \label{evolutionofnormalspeed}
\frac{d}{dt}\left(\eta(F_t)\cdot \nu(t)\right)=(\eta(F_t)\cdot\nu(t)) (\div \eta -\div_{M_t}\eta)\circ F_t,
\end{align}and
\begin{align}
    \label{hijevolution}
h_{ij}'(t)=-<\nabla_i\nabla_j\eta,\nu(t)>, 
\end{align}where $h_{ij}(t)$ and $\nabla_i\nabla_j \eta$ are the $i,j$-components of $h(t)$ and the Hessian of $\eta$ on $M_t$, respectively, under local coordinates of $M_t$.

If $M$ is an $(n-1)$-sphere of radius $R$, then we also have
\begin{align}
    \label{evoonsphere}
\frac{d}{dt}\Big|_{t=0}H=-\Delta_M(\eta\cdot \nu)-\frac{n-1}{R^2}\eta\cdot\nu.     
\end{align}
\end{proposition}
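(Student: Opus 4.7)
My plan is to prove the four formulas in order, using the common strategy of picking a convenient local frame on $M_t$ (orthonormal, with vanishing Christoffel symbols at the point of interest) so that each evolution equation reduces to a direct calculation.

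\textbf{Area element and normal speed.} For \eqref{evolutionofarea2}, write $d\sigma_t = \sqrt{\det g(t)}\,dx$ with $g_{ij}(t) = \partial_iF_t\cdot\partial_jF_t$, and differentiate: $\partial_tg_{ij} = \partial_i\eta\cdot\partial_jF_t + \partial_iF_t\cdot\partial_j\eta$. Jacobi's identity $\partial_t\det g = \det g\cdot g^{ij}\partial_tg_{ij}$ together with symmetry of $g^{ij}$ gives \eqref{evolutionofarea2} directly from \eqref{tangentialdivergence}. For \eqref{evolutionofnormalspeed}, the chain rule gives $\tfrac{d}{dt}(\eta\circ F_t) = (\nabla\eta)\eta$, and differentiating the constraints $\nu(t)\cdot\partial_iF_t = 0$ and $|\nu(t)|^2 = 1$ shows that $\nu'(t) = -g^{ij}(\nu\cdot\partial_i\eta)\partial_jF_t$ is tangent to $M_t$. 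Decomposing $\eta = \eta_T + (\eta\cdot\nu)\nu$ in an adapted orthonormal frame, the tangential piece $\nu\cdot(\nabla\eta)\eta_T$ coming from $\nu\cdot(\nabla\eta)\eta$ is exactly cancelled by the contribution of $\eta_T$ to $\eta\cdot\nu'(t)$, leaving only $(\eta\cdot\nu)[\nu\cdot(\nabla\eta)\nu]$. The identity $\div\eta = \div_{M_t}\eta + \nu\cdot(\nabla\eta)\nu$, obtained by taking the trace of $\nabla\eta$ in the adapted frame, then yields \eqref{evolutionofnormalspeed}.

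\textbf{Second fundamental form and mean curvature on a sphere.} For \eqref{hijevolution}, start from $h_{ij}(t) = -\nu(t)\cdot\partial_i\partial_jF_t$, so $h'_{ij}(t) = -\nu'(t)\cdot\partial_i\partial_jF_t - \nu(t)\cdot\partial_i\partial_j(\eta\circ F_t)$. In coordinates whose Christoffel symbols vanish at the point $p$ of interest, $\partial_i\partial_jF_t(p) = -h_{ij}\nu$ is normal, so $\nu'\perp\nu$ kills the first term; and the ordinary second partial of $\eta\circ F_t$ at $p$ coincides with the intrinsic Hessian on $M_t$ of $\eta|_{M_t}$ applied to $\partial_iF_t,\partial_jF_t$, that is, with $\nabla_i\nabla_j\eta$ in the paper's notation, giving \eqref{hijevolution}. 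For \eqref{evoonsphere}, write $H(t) = g^{ij}(t)h_{ij}(t)$, differentiate at $t=0$ using $(g^{ij})'(0) = -(g_{ij})'(0)$ and $h_{ij}(0) = \delta_{ij}/R$ in orthonormal coordinates; this produces $H'(0) = -(2/R)\div_M\eta - \nu\cdot\Delta_M\eta$. To recognize the right-hand side, I would use $\nu = F/R$ and the spherical identity $\Delta_MF = -\tfrac{n-1}{R^2}F$, and expand $\Delta_M(\eta\cdot F) = (\Delta_M\eta)\cdot F + 2\langle\nabla_M\eta,\nabla_MF\rangle + \eta\cdot\Delta_MF$ by the Leibniz rule for the Laplace--Beltrami operator (the mixed term reduces to $\div_M\eta$ in orthonormal coordinates), then rearrange to match \eqref{evoonsphere}.

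The main obstacle is the bookkeeping in \eqref{hijevolution}: one must carefully distinguish between the ordinary second partial of $\eta\circ F_t$ in the parameters, which carries an extra $(\nabla\eta)(\partial_i\partial_jF_t) = -h_{ij}(\nabla\eta)\nu$ term, and the intrinsic Hessian on $M_t$ of $\eta|_{M_t}$; the normal-coordinate choice at $p$ is precisely what washes out this discrepancy. The subsequent spherical computation then relies on this identity and on the proportionality of $h_{ij}$ and $g_{ij}$ on the sphere, so that the metric-derivative contribution $(g^{ij})'h_{ij}$ collapses cleanly to a divergence that combines with the Leibniz expansion of $\Delta_M(\eta\cdot F)$ to produce the claimed $-\Delta_M(\eta\cdot\nu) - \tfrac{n-1}{R^2}(\eta\cdot\nu)$.
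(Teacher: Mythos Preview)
Your proposal is correct and for \eqref{evolutionofarea2} and \eqref{evolutionofnormalspeed} it is essentially identical to the paper's argument. The differences appear in the other two formulas, and both are worth noting.

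For \eqref{hijevolution} the paper does \emph{not} pass to normal coordinates: it differentiates $h_{ij}=\langle\partial_i\nu,\partial_jF_t\rangle$ and carries the Christoffel symbols explicitly through the computation, arriving at $-\langle\nu,\partial_i\partial_j\eta\rangle+\Gamma_{ij}^k\langle\nu,\partial_k\eta\rangle=-\langle\nabla_i\nabla_j\eta,\nu\rangle$ in arbitrary coordinates. Your normal-coordinate shortcut is legitimate and shorter, since both sides are tensorial. One small correction to your ``main obstacle'' paragraph: the term $-h_{ij}(\nabla\eta)\nu$ is \emph{not} the discrepancy between $\partial_i\partial_j\eta$ and $\nabla_i\nabla_j\eta$; that discrepancy is $\Gamma_{ij}^k\partial_k\eta$, which is indeed what vanishes in normal coordinates. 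The term $-h_{ij}(\nabla\eta)\nu$ sits inside $\partial_i\partial_j\eta$ itself and remains part of $\nabla_i\nabla_j\eta$---it is not ``washed out''. This does not affect your argument, only its description.

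For \eqref{evoonsphere} the two routes genuinely diverge after the common intermediate step $H'(0)=-\tfrac{2}{R}\div_M\eta-\langle\Delta_M\eta,\nu\rangle$. The paper splits $\eta=\eta^T+\zeta\nu$ and computes $\langle\Delta_M\eta^T,\nu\rangle=-\tfrac{2}{R}\div_M\eta^T$ and $\langle\Delta_M(\zeta\nu),\nu\rangle=\Delta_M\zeta-\tfrac{n-1}{R^2}\zeta$ separately, then recombines using $\div_M\eta=\div_M\eta^T+\tfrac{n-1}{R}\zeta$. Your route---writing $\nu=F/R$ and expanding $\Delta_M(\eta\cdot F)$ by the scalar Leibniz rule with $\Delta_MF=-\tfrac{n-1}{R}\nu$ and $\langle\nabla_M\eta,\nabla_MF\rangle=\div_M\eta$---is cleaner: it avoids the tangential/normal split entirely and reaches the answer in one line of algebra. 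The paper's decomposition, on the other hand, makes the separate contributions of $\eta^T$ and $\zeta\nu$ visible, which can be useful in other contexts.
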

\begin{proof}
Let $g=g(t)$ be the metric of $M_t$ induced by $F_t$, and let $F$ be the position vector of $M$. We also let $(x^1, x^2, \cdots, x^{n-1})$ be local coordinates of $M$, and thus they also serve as local coordinates of $M_t$. When taking derivatives of a vector field $\eta$ restricted on $M_t$ with respect to local coordinates, we use the abbreviation that 
\begin{align}
    \label{convention}
\partial_i \eta=(\nabla \eta \circ F_t) \partial_i F_t,
\end{align}
where $\partial_i F_t=\frac{\partial F_t}{\partial x^i}$, and then we have
\begin{align*}
    g_{ij}'(t)=\frac{d}{dt}<\partial_iF_t,\partial_jF_t>=\partial_i \eta \cdot \partial_j F_t+\partial_iF_t\cdot \partial_j\eta.
\end{align*}
This and \eqref{tangentialdivergence} imply that
\begin{align*}
    \frac{d}{dt}\sqrt{|g(t)|}=\frac{1}{2\sqrt{|g(t)|}}|g(t)|g^{ij}(t) \, 2(\partial_i \eta \cdot \partial_j F_t+\partial_iF_t\cdot \partial_j\eta)=(\div_{M_t}\eta)\sqrt{|g(t)|}. 
\end{align*}Hence \eqref{evolutionofarea2} is proved.

Next, let $\eta^T(F_t)$ be the tangential component of $\eta(F_t)$ along $ M_t$, then we have
\begin{align}
\label{etaT}
    \eta^T(F_t)=g^{ij}(t)<\eta(F_t),\partial_jF_t>\partial_iF_t.
\end{align}Therefore, by \eqref{convention}, \eqref{etaT} and since $\nu'(t)\perp M_t$, we have
\begin{align*}
    \frac{d}{dt}\left(\eta(F_t)\cdot \nu(t)\right)=&\Big((\nabla \eta) \eta\Big)\circ F_t\cdot \nu(t)+<\eta(F_t), g^{ij}(t)\Big( \nu'(t)\cdot \partial_iF_t\Big) \partial_j F_t>\\
    =&\Big((\nabla \eta) \eta\Big)\circ F_t\cdot \nu(t)-g^{ij}(t)<\nu(t),\partial_i \eta><\eta(F_t),\partial_j F_t>\\
    =&\Big((\nabla \eta) \eta^T\Big)\circ F_t\cdot \nu(t)+\Big((\nabla \eta\circ F_t) ((\eta\circ F_t\cdot \nu(t))\nu(t) ) \Big)\cdot \nu(t)-<\nu(t),\Big((\nabla \eta) \eta^T\Big)\circ F_t>\\
    =&\Big((\nabla \eta\circ F_t) ((\eta\circ F_t\cdot \nu(t))\nu(t) ) \Big)\cdot \nu(t)\\
    =&(\eta(F_t)\cdot\nu(t)) (\div \eta -\div_{M_t}\eta)\circ F_t,
\end{align*}where the last equality is obtained by taking the trace of the following identity.
\begin{align*}
    \nabla \eta =\nabla ^{M_t}\eta +\nabla \eta \nu(t)\otimes \nu(t).
\end{align*}
Hence \eqref{evolutionofnormalspeed} is proved.
Next, we compute the derivative of $h(t)$. We have
\begin{align*}
    h_{ij}'(t)=&\partial_i\nu'(t)\cdot \partial_j F_t+\partial_i\nu(t)\cdot \partial_j\eta\\
    =&\partial_i(\nu'(t)\cdot \partial_j F_t)-\nu'(t)\cdot \partial_i\partial_jF_t+\partial_i\nu(t)\cdot \partial_j\eta\\
    =&-\partial_i(\nu(t)\cdot \partial_j\eta)-g^{kl}(t)(\nu'(t)\cdot\partial_kF_t)<\partial_lF_t,\partial_i\partial_jF_t>+\partial_i\nu(t)\cdot \partial_j\eta\\
    =&-<\nu(t),\partial_i\partial_j\eta>-g^{kl}(t)(\nu'(t)\cdot\partial_k F_t) \Gamma_{ij}^m(t)g_{lm}(t), \, \mbox{where $\Gamma_{ij}^m(t)$ are Christoff symbols on $M_t$}\\
    =&-<\nu(t),\partial_i\partial_j\eta>+\Gamma_{ij}^k(t)<\nu(t),\partial_k\eta>\\
    =&-<\nabla_i\nabla_j\eta,\nu(t)>.
\end{align*}Hence \eqref{hijevolution} is proved.

Last, to prove \eqref{evoonsphere}, note first that
\begin{align*}
    H'(t)=&\frac{d}{dt}\left(g^{ij}(t)h_{ij}(t)\right)\\
    =&-2g^{il}(t)g^{mj}(t)<\partial_m\eta,\partial_lF_t>h_{ij}(t)-g^{ij}(t)<\nabla_i\nabla_j\eta,\nu(t)>\\
    =&-2g^{il}(t)g^{mj}(t)<\partial_m\eta,\partial_lF_t>h_{ij}(t)-<\Delta_{M_t}\eta,\nu(t)>.
\end{align*}Since on sphere, $h_{ij}=\frac{g_{ij}}{R}$, using \eqref{tangentialdivergence}, we have
\begin{align}
\label{l0}
    H'(0)=-\frac{2}{R}\div_{M}\eta-<\Delta_M\eta,\nu>.
\end{align}Since $<\Delta_M\eta,\nu>$ does not depend on the choice of coordinates, in the following we choose normal coordinates to do the computation. Let $\eta=\eta^T+\zeta\nu$, then using normal coordinates we have
\begin{align}
\label{l1}
    <\Delta_M\eta^T,\nu>=&\partial_i<\partial_i\eta^T,\nu>-<\partial_i\eta^T,\partial_i\nu>\nonumber\\
    =&-\partial_i<\eta^T,\partial_i\nu>-h_{i}^l<\partial_i\eta^T,\partial_lF>\nonumber\\
    =&-\partial_i\left(h_{i}^l<\eta^T,\partial_lF>\right)-h_{i}^l<\partial_i\eta^T,\partial_lF>\nonumber\\
    =&\frac{1}{R}\left(-\partial_i\left(g_{i}^l<\eta^T,\partial_lF>\right)-g_{i}^l<\partial_i\eta^T,\partial_lF>\right),\quad \mbox{since $\frac{g_{ij}}{R}=h_{ij}$}\nonumber\\
    =&\frac{1}{R}\left(-\partial_i<\eta^T,\partial_iF>-<\partial_i\eta^T,\partial_iF>\right),\quad\mbox{since $g_j^l=\delta_{jl}$ and $\partial_ig_j^{l}=0$}\nonumber\\
    =& \frac{1}{R}\left(-2\div_{M}\eta^T-<\eta^T,\Delta_MF>\right)\nonumber\\
    =&-\frac{2}{R}\div_M\eta^T,\quad \mbox{since $\Delta_MF=-H\nu \perp \eta^T$.}
\end{align}Direct computation also leads to
\begin{align}
\label{l2}
    <\Delta_M(\zeta\nu),\nu>=\Delta_M\zeta-\frac{n-1}{R^2}\zeta.
\end{align}
Since on the sphere of radius $R$, 
\begin{align*}
    \div_{M}\eta=\div_{M}\eta^T+\frac{n-1}{R}\zeta,
\end{align*}by \eqref{l0}-\eqref{l2} we obtain \eqref{evoonsphere}.
\end{proof}

\section{First and Second Variation of $E_m(\cdot)$}
Recall that if $\Omega$ is connected, we use $u_{\Omega}$ to denote the unique minimizer to $J_m(\cdot, \Omega)$, where $J_m(\cdot, \Omega)$ is given by \eqref{energy1}. First, we have
\begin{proposition}
\label{radial}
If $\Omega=B_R$ and $f$ is radial and nonnegative, then $u_{\Omega} \ge 0$ is also radial and satisfies
\begin{align}
\label{ul}
    \begin{cases}
    -\Delta u=f \quad & \mbox{in $\Omega$}\\
    \frac{\partial u}{\partial \nu}=-\frac{1}{m}\int_{\partial \Omega}u d\sigma\quad & \mbox{on $\partial \Omega$}
    \end{cases}
\end{align}
\end{proposition}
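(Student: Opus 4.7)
The plan is to derive all three assertions (nonnegativity, radial symmetry, and the PDE system) from the uniqueness of the minimizer of $J_m(\cdot, B_R)$ stated earlier in the paper, together with a careful first variation computation.

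For nonnegativity, I would observe that for any $u \in H^1(B_R)$ one has $|\nabla |u|| = |\nabla u|$ a.e., the boundary integral is unchanged when $u$ is replaced by $|u|$, and $-\int_{B_R} f|u|\,dx \le -\int_{B_R} fu\,dx$ because $f \ge 0$. Hence $J_m(|u|, B_R) \le J_m(u, B_R)$. Applying this to $u = u_\Omega$, the function $|u_\Omega|$ is also a minimizer, and uniqueness forces $u_\Omega = |u_\Omega| \ge 0$ a.e. For radial symmetry, the radial symmetry of $f$ and the change of variables $y = Tx$ give $J_m(u \circ T, B_R) = J_m(u, B_R)$ for every $T \in O(n)$, so $u_\Omega \circ T$ is a minimizer; uniqueness again forces $u_\Omega \circ T = u_\Omega$ for all $T \in O(n)$.

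For the interior equation, I would test $J_m(\cdot, B_R)$ with $\varphi \in C_c^\infty(B_R)$. Since $\varphi$ vanishes in a neighborhood of $\partial B_R$ and $u_\Omega \ge 0$, the term $(\int_{\partial B_R} |u_\Omega + t\varphi|\,d\sigma)^2$ is constant in $t$ for small $|t|$, so the first variation reduces to $\int_{B_R} \nabla u_\Omega \cdot \nabla \varphi\,dx - \int_{B_R} f\varphi\,dx = 0$, yielding $-\Delta u_\Omega = f$ weakly in $B_R$; standard elliptic regularity upgrades this to a classical identity since $f$ is smooth.

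For the boundary condition, the key preliminary step is to show that the (radially constant) trace $c := u_\Omega|_{\partial B_R}$ is strictly positive whenever $f \not\equiv 0$ (the case $f \equiv 0$ forces $u_\Omega \equiv 0$ by uniqueness, and (\ref{ul}) is then trivial). I would test against the constant perturbation $u_\Omega + t$ for $t > 0$, which gives
$$J_m(u_\Omega + t, B_R) - J_m(u_\Omega, B_R) = \frac{1}{2m}\Bigl((c + t)^2 - c^2\Bigr)P(B_R)^2 - t\int_{B_R} f\,dx;$$
if $c = 0$ the right-hand side equals $\tfrac{t^2}{2m}P(B_R)^2 - t\int_{B_R} f\,dx$, which is strictly negative for small $t > 0$ since $\int_{B_R} f > 0$, contradicting minimality. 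With $c > 0$ established, for any $\varphi \in H^1(B_R)$ and small $|t|$ the trace $u_\Omega + t\varphi$ remains positive on $\partial B_R$, so $(\int_{\partial B_R} |u_\Omega + t\varphi|\,d\sigma)^2$ is smooth in $t$; setting the first derivative at $t = 0$ to zero, integrating $\int_{B_R} \nabla u_\Omega \cdot \nabla \varphi\,dx$ by parts against $-\Delta u_\Omega = f$, and letting $\varphi$ range over all of $H^1(B_R)$ yields $\frac{\partial u_\Omega}{\partial \nu} = -\frac{1}{m}\int_{\partial B_R} u_\Omega\,d\sigma$ on $\partial B_R$.

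The main technical point is the positivity $c > 0$ on the boundary: without it, the absolute value in the boundary term of $J_m$ is not differentiable at $u_\Omega$, and one cannot directly read off the Robin-type condition. The comparison with the constant perturbation $u_\Omega + t$ circumvents this cleanly and is the only step that uses the specific geometry of the ball (via radial symmetry forcing the trace to be a single number).
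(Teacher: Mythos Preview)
Your proof is correct but follows a different route from the paper's. The paper argues constructively: it first solves the pure Neumann problem $-\Delta u_0 = f$ in $B_R$, $\partial_\nu u_0 = -\int_{B_R} f / P(B_R)$ (whose solutions are radial and differ only by additive constants), then chooses the constant $c$ so that $u := u_0 + c$ satisfies the nonlocal condition $\partial_\nu u = -\tfrac{1}{m}\int_{\partial B_R} u$; since $\partial_\nu u \le 0$ this forces $u \ge 0$ on $\partial B_R$, hence $u$ satisfies the Euler--Lagrange equation of $J_m$, and uniqueness of the minimizer gives $u = u_\Omega$. Your route is instead variational: you start from the minimizer $u_\Omega$ and read off nonnegativity and radiality from the invariance of $J_m$ under $u \mapsto |u|$ and $u \mapsto u \circ T$ combined with uniqueness, then obtain the PDE by first variation, with the comparison against $u_\Omega + t$ ruling out a vanishing boundary trace so that the absolute value becomes differentiable. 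The paper's construction is slightly shorter and yields the boundary positivity $u_\Omega|_{\partial B_R} > 0$ (the Remark following the proposition) as an automatic byproduct; your argument is more portable, since the symmetry and sign steps work for any strictly convex radially invariant functional, and it makes explicit why uniqueness is the engine behind each conclusion.
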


\begin{proof}
Note that the solutions to 
\begin{align}
\label{el'}
    \begin{cases}
    -\Delta u=f \quad & \mbox{in $\Omega$}\\
    \frac{\partial u}{\partial \nu}=-\frac{\int_{\Omega}f dx}{P(\Omega)}\quad & \mbox{on $\partial \Omega$}
    \end{cases}
\end{align}are radial, and they are equal up to a constant. Let $u_0$ be a solution to \eqref{el'}, then we can find a constant $c$ such that 
\begin{align*}
    \frac{1}{m}\int_{\partial \Omega} (u_0+c) d\sigma=\frac{\int_{\Omega}f dx}{P(\Omega)}.
\end{align*}Hence $u:=u_0+c$ is a radial solution to \eqref{ul}. Since $f \ge 0$, by \eqref{el'} we know $\frac{\partial u}{\partial \nu} \le 0$ on $\partial \Omega$. Then by $\eqref{ul}_2$ we have $u \ge 0$ on $\partial \Omega$, and $u>0$ if $f \ne0$. Hence $u$ satisfies the Euler Lagrange equation to $J_m(\cdot, \Omega)$, and thus $u$ is a minimizer. Hence $u=u_{\Omega}$ by uniqueness.
\end{proof}
\begin{remark}
If the radial function $f\ge 0$ and $f \ne 0$, then $u_{B_R}>0$ on $\partial B_R$.
\end{remark}
In the following, we write $\Omega_t=F_t(\Omega)$, $u(t)(x):=u_{\Omega_t}(x)$ and $u'(t)(x)=\frac{\partial}{\partial t}u_{\Omega_t}(x)$, where  $F_t(x):=F(t, x)$ is the flow map generated by the velocity field $\eta \in C_0^{\infty}(\mathbb{R}^n,\mathbb{R}^n)$. Note that $u'(t)$ is well defined for $|t|$ small enough, due to similar argument in \cite{Henrot}, \cite{BW} or \cite{DLW}, see also \eqref{buchong} below. Since $F_t$ preserves the volume of $\Omega$, we have
\begin{align}
\label{volumepreserving}
    \int_{\partial \Omega} (\eta \cdot \nu)\, d\sigma=0\,\, \mbox{and}\,\int_{\partial \Omega} (\eta\cdot \nu)\div\eta \, d\sigma=0.
\end{align}
Now we are ready to derive the first variation of energy. 
\begin{proposition}
\label{firstderivative'}
Let $F_t(x):=F(t,x)$ be the flow map generated by $\eta \in C^3_0(\mathbb{R}^n,\mathbb{R}^n)$, and $f\ge 0, f\ne 0$ be a smooth radial function. If $u_{\Omega}>0$ on $\partial \Omega$, then we have
\begin{align}
\label{firstderivative}
    \frac{d}{dt}E_m(\Omega_t)=\int_{\partial \Omega_t}\Big(\frac{1}{2}|\nabla u(t)|^2-\Big|\frac{\partial u(t)}{\partial\nu}\Big|^2+\left(\frac{1}{m}\int_{\partial \Omega_t}u(t) d\sigma \right) u(t)H(t)-fu(t)\Big)\eta \cdot \nu(t) \,d\sigma,
\end{align}where $H(t)$ is the mean curvature of $\partial \Omega_t$ and $\nu(t)$ is the outer unit normal to $\partial \Omega_t$. In particular, if $\Omega$ is a ball, then \eqref{firstderivative} vanishes at $t=0$.
\end{proposition}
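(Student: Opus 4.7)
The plan is to differentiate $E_m(\Omega_t)$ in $t$, exploiting the fact that $u(t):=u_{\Omega_t}$ minimizes the convex quadratic $J_m(\cdot,\Omega_t)$. First I would invoke the standard Hadamard shape-differentiability theory (cf.~\cite{Henrot,BW,DLW}) to ensure that $t\mapsto u(t)$ is smooth up to the boundary of the moving domain, with shape derivative $u'(t)\in H^1(\Omega_t)$. By continuity, the hypothesis $u_\Omega>0$ on $\partial\Omega$ persists as $u(t)>0$ on $\partial\Omega_t$ for $|t|$ small, so the argument of Proposition~\ref{radial} (stripped of its use of radiality) shows that $u(t)$ satisfies
$$
-\Delta u(t)=f\ \text{in }\Omega_t,\qquad \frac{\partial u(t)}{\partial\nu(t)}=-\frac{1}{m}\int_{\partial\Omega_t}u(t)\,d\sigma\ \text{on }\partial\Omega_t;
$$
in particular $\partial_{\nu(t)}u(t)$ is a constant along $\partial\Omega_t$.

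Next I would apply the standard Reynolds-type transport formulas to each piece of $E_m(\Omega_t)$. For the bulk terms $\tfrac12\int_{\Omega_t}|\nabla u(t)|^2$ and $\int_{\Omega_t}fu(t)$, one uses $\tfrac{d}{dt}\int_{\Omega_t}g(t)=\int_{\Omega_t}\partial_t g+\int_{\partial\Omega_t}g\,\eta\cdot\nu(t)\,d\sigma$. For the boundary term, \eqref{evolutionofarea2} of Proposition~\ref{geometricevolution} together with an integration by parts of the tangential part $\nabla u(t)\cdot\eta^T+u(t)\,\div_{\partial\Omega_t}\eta^T$ over the closed hypersurface $\partial\Omega_t$ yields
$$
\frac{d}{dt}\int_{\partial\Omega_t}u(t)\,d\sigma=\int_{\partial\Omega_t}u'(t)\,d\sigma+\int_{\partial\Omega_t}\bigl(\partial_{\nu(t)}u(t)+H(t)u(t)\bigr)\eta\cdot\nu(t)\,d\sigma.
$$
Collecting the three derivatives, every integrand containing $u'(t)$ reassembles into the weak form of the Euler--Lagrange equation tested against $\phi=u'(t)$, and therefore drops out. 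The remaining boundary integrals, after substituting the constant value $\partial_{\nu(t)}u(t)=-\tfrac1m\int_{\partial\Omega_t}u(t)\,d\sigma$ into the $\partial_{\nu(t)}u(t)$-contribution coming from the identity above, recombine to produce exactly \eqref{firstderivative}.

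For the \emph{in particular} claim, Proposition~\ref{radial} says that on $\Omega=B_R$ the minimizer $u_{B_R}$ is radial; consequently $u$, $|\nabla u|^2$, $\partial_\nu u$, $H$ and $f$ are each constant on $\partial B_R$, so the bracket in \eqref{firstderivative} at $t=0$ equals some constant $C$. The volume-preserving hypothesis \eqref{volumepreserving} gives $\int_{\partial B_R}\eta\cdot\nu\,d\sigma=0$, so $\tfrac{d}{dt}\big|_{t=0}E_m(F_t(B_R))=C\int_{\partial B_R}\eta\cdot\nu\,d\sigma=0$.

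The main technical obstacle is the rigorous justification that $u'(t)$ may be used as a test function in the Euler--Lagrange equation posed on the moving domain $\Omega_t$; the standard fix is to pull back to the fixed reference domain $\Omega$ via $F_t$, differentiate the resulting $t$-family of variational problems on $\Omega$, and check that no boundary terms are lost in the transfer. Since this machinery is carried out in detail in \cite{Henrot,BW,DLW}, I would cite those references rather than redo the routine bookkeeping.
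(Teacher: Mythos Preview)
Your proposal is correct and follows essentially the same route as the paper: split $E_m(\Omega_t)$ into its three summands, apply the Reynolds transport formula to the two bulk integrals, differentiate the boundary integral via \eqref{evolutionofarea2} followed by the tangential divergence theorem to convert $\nabla u\cdot\eta+u\,\div_{\partial\Omega_t}\eta$ into $(\partial_\nu u+uH)\,\eta\cdot\nu$, and then observe that all $u'(t)$-terms cancel by the Euler--Lagrange system, leaving \eqref{firstderivative}. The treatment of the ``in particular'' clause via radiality plus $\eqref{volumepreserving}_1$ is likewise identical to the paper's.
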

\begin{proof}
We let $E_m(\Omega_t)=J_m(u_{\Omega_t},\Omega_t)=I_1(t)+I_2(t)+I_3(t)$ coresponding to the first, second and third term of $J_m$ defined in \eqref{energy1}. We have
\begin{align*}
    I_1'(t)=& \int_{\Omega_t} \nabla u(t) \cdot\nabla u'(t)\,dx+\int_{\partial \Omega_t} \frac{1}{2}|\nabla u(t)|^2 \eta \cdot \nu(t) \,d\sigma_t\\
    =& \int_{\Omega_t} fu'(t)\, dx+\int_{\partial \Omega_t} \frac{\partial u(t)}{\partial \nu }u'(t) d\sigma_t  +\int_{\partial \Omega_t} \frac{1}{2}|\nabla u(t)|^2 \eta \cdot \nu(t) \,d\sigma_t.
\end{align*}
\begin{align*}
    I_2'(t)=&\frac{1}{m}\int_{\partial \Omega_t}u(t) d\sigma_t  \int_{\partial \Omega_t} (u'(t)+\nabla u(t) \cdot \eta  +u(t)\div_{\partial \Omega_t}\eta)d\sigma_t\\
    =&\frac{1}{m}\int_{\partial \Omega_t}u(t) d\sigma_t  \int_{\partial \Omega_t} (u'(t)+\frac{\partial u(t)}{\partial \nu} \eta \cdot \nu(t) +uH(t) \eta \cdot \nu(t))d\sigma_t .
\end{align*}
\begin{align*}
    I_3'(t)=-\int_{\Omega_t} fu'(t)dx-\int_{\partial \Omega_t}fu(t)\eta \cdot \nu(t) d\sigma_t.
\end{align*}
In the above, we have used the formula \eqref{evolutionofarea2},  integration by parts on closed hypersurfaces and the formula
\begin{align}
\label{tibianfen}
    \frac{d}{dt}\int_{F_t(\Omega)}g(t,y)dy=\int_{F_t(\Omega)}g_t(t,y)dy+\int_{\partial F_t(\Omega)}g(t,y)\eta \cdot \nu(t) d\sigma_t.
\end{align}
Since $f\ne 0$, $u$ is strictly positive along $\partial B_R$, and hence if $t$ is small, $u(t)>0$ on $\partial \Omega_t$. Therefore, by applying \eqref{ul} on $(u(t),\Omega_t)$ and adding up $I_1'(t)$, $I_2'(t)$ and $I_3'(t)$, we obtain \eqref{firstderivative}. 

If $\Omega$ is a ball and $f$ is radial, then by Proposition \ref{radial}, $u$ is radial, and hence by  $\eqref{volumepreserving}_1$, \eqref{firstderivative} vanishes when $t=0$.
\end{proof}

Next, we will derive the second variation formula of $E_m(\cdot)$ at $B_R$ under the flow map which is not necessarily normal to $\partial B_R$, based on the formulas proved in section 2.
\begin{lemma}
\label{secondvariation}
Let $\Omega=B_R$, $\Omega_t=F_t(\Omega)$ and $u(t)=u_{F_t(\Omega)}$, where $F_t(x):=F(t,x)$ is the flow map generated by a smooth velocity field $\eta$. Let $f(x)=f(r)$ be a smooth nonnegative radial function and assume $f\ne 0$, then we have 
\begin{align}
\label{secondvariationformula}
   \frac{d^2}{dt^2}\Big|_{t=0}E_m(\Omega_t)   =& \int_{\partial B_R}\left(v\zeta u_{rr}+u_{rr}u_r\zeta^2-f'(r) u\zeta^2\right) d\sigma\nonumber\\
   &+ \left(\frac{1}{m}\int_{\partial B_R} u^2 d\sigma \right)\int_{\partial B_R}\left(|\nabla_{\partial B_R} \zeta |^2-\frac{n-1}{R^2}\zeta^2\right)d\sigma,
\end{align}where $\zeta=\eta \cdot \nu$ on $\partial B_R$ and $v=u'(0)$.
\end{lemma}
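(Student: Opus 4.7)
The plan is to differentiate the first variation formula \eqref{firstderivative} of Proposition \ref{firstderivative'} once more at $t=0$. Writing $E_m'(t)=\int_{\partial\Omega_t} G(t)\,(\eta\cdot\nu(t))\,d\sigma_t$ with $G(t)=\tfrac12|\nabla u(t)|^2-|\partial_{\nu(t)}u(t)|^2+\bigl(\tfrac1m\int_{\partial\Omega_t} u(t)\,d\sigma\bigr)u(t)H(t)-fu(t)$, I apply formula \eqref{evolutionofarea2} for $\tfrac{d}{dt}d\sigma_t$ together with \eqref{evolutionofnormalspeed} for $\tfrac{d}{dt}(\eta\cdot\nu(t))$. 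The first crucial simplification is that, by radiality of both $u_{B_R}$ and $f$, $G(0)$ is \emph{constant} on $\partial B_R$; combined with the identity $\int_{\partial B_R}\zeta\,\div\eta\,d\sigma=0$ from $\eqref{volumepreserving}_2$, the contributions involving $G(0)$ cancel, yielding
\[
\tfrac{d^2}{dt^2}\Big|_{t=0}E_m(\Omega_t)=\int_{\partial B_R}\dot G(0)\,\zeta\,d\sigma,\qquad \dot G(0):=\tfrac{d}{dt}\Big|_{t=0}G(t,F_t(\sigma)).
\]

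Next I compute $\dot G(0)$ by splitting $G=A+B+C$ into its three pieces. For $A=\tfrac12|\nabla u|^2-|\partial_\nu u|^2$, the key observations on $\partial B_R$ are (i) $\nabla u=u_r\nu$ and $(\nabla^2 u)\eta\cdot\nu=u_{rr}\zeta$ by the radial structure of $u$, and (ii) $\nu\cdot\dot\nu=0$ since $|\nu|=1$; these reduce $\dot A$ to $-u_r v_r-u_r u_{rr}\zeta$. For $B=\tfrac1m(\int u\,d\sigma)uH$, the evolution of $H$ along the flow is supplied by \eqref{evoonsphere}, namely $\dot H(0)=-\Delta_{\partial B_R}\zeta-\tfrac{n-1}{R^2}\zeta$; after multiplying by $\zeta$ and integrating by parts on the closed sphere, this produces the factor $|\nabla_{\partial B_R}\zeta|^2-\tfrac{n-1}{R^2}\zeta^2$, which combined with the coefficient $\tfrac{u(R)^2 P(B_R)}{m}=\tfrac1m\int u^2\,d\sigma$ gives the second integral of \eqref{secondvariationformula}. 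The derivative of $\int u\,d\sigma_t$ along the flow simplifies to $\int v\,d\sigma$ using $\int\zeta\,d\sigma=0$. Finally $\dot C=-f'(r)u\zeta-f(v+u_r\zeta)$ on $\partial B_R$, since $\nabla f=f'(r)\nu$ by radiality.

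The decisive remaining step is to derive the boundary condition for $v=u'(0)$: differentiating the Neumann identity $\partial_{\nu(t)}u(t)=-\tfrac1m\int u(t)\,d\sigma_t$ along the flow produces
\[
v_r+u_{rr}\zeta+\tfrac1m\!\int_{\partial B_R}\!v\,d\sigma=0 \quad\text{on }\partial B_R.
\]
Substituting this into $-\int_{\partial B_R}u_r v_r\zeta\,d\sigma$ and using $\int\zeta\,d\sigma=0$ converts that term into $u_r u_{rr}\int\zeta^2\,d\sigma$, cancelling the $-u_r u_{rr}\zeta^2$ piece already present in $\dot A$. Two Euler--Lagrange identities, $u_r=-\tfrac1m\int_{\partial B_R}u\,d\sigma$ from \eqref{ul} and $u_{rr}+\tfrac{n-1}{R}u_r=-f(R)$ (the radial form of $-\Delta u=f$), then collapse the remaining $v\zeta$ contributions to $u_{rr}\int v\zeta\,d\sigma$ and the remaining $\zeta^2$ contributions to $(u_r u_{rr}-f'(R)u)\int\zeta^2\,d\sigma$, which matches \eqref{secondvariationformula}.

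The main obstacle is careful bookkeeping: one must distinguish the Eulerian derivative $v=u'(0)$ at fixed $x$ from the Lagrangian flow derivative $v+\nabla u\cdot\eta$, track that $\nu'(0)$ is purely tangential, and invoke \eqref{evoonsphere} — essential here because for a non-normal variation the curvature derivative is not merely $-\Delta_{\partial B_R}\zeta$ but carries the extra $-\tfrac{n-1}{R^2}\zeta$ term, which is precisely what produces the Steklov-like quadratic form in the final answer. Without the derived boundary condition for $v$, the term $-\int u_r v_r\zeta\,d\sigma$ would not close, so that identity is the linchpin of the entire calculation.
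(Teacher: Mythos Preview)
Your proposal is correct and follows essentially the same route as the paper: differentiate the first variation formula \eqref{firstderivative} term by term using the evolution identities \eqref{evolutionofarea2}, \eqref{evolutionofnormalspeed}, \eqref{evoonsphere}, derive the boundary relation $v_r+u_{rr}\zeta+\tfrac1m\int_{\partial B_R}v\,d\sigma=0$, and collapse everything with the radial Euler--Lagrange identities $u_r=-\tfrac1m\int u\,d\sigma$ and $u_{rr}+\tfrac{n-1}{R}u_r+f=0$. Your organization is slightly more economical than the paper's---you observe at the outset that $G(0)$ is constant on $\partial B_R$ and kill all the $G(0)\zeta\,\div\eta$ terms in one stroke via $\eqref{volumepreserving}_2$, whereas the paper treats the four pieces of the integrand separately in \eqref{1}--\eqref{4}; you also bypass the explicit fact $\int_{\partial B_R}v\,d\sigma=0$ (the paper's \eqref{stekloffcondition}) by using $\int\zeta\,d\sigma=0$ instead. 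One minor inaccuracy in your commentary: the extra $-\tfrac{n-1}{R^2}\zeta$ in $\dot H(0)$ is not a feature of non-normal variations per se---it is the $|A|^2\zeta$ term that already appears in the standard normal evolution of $H$, specialized to the sphere---but this does not affect the argument.
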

\begin{proof}
We let $u=u_{B_R}$ and we also define $$\zeta(t)(x):=\eta(x)\cdot \nu(t),\quad \mbox{on $\partial \Omega_t$},$$ where $\nu(t)$ is the unit outer normal to $\partial \Omega_t$. Hence $\zeta=\zeta(0)$. We first show that
\begin{align}
\label{vequation}
    \begin{cases}
    \Delta v=0 \quad & \mbox{in $B_R$}\\
    \frac{\partial v}{\partial \nu}=-u_{rr}\zeta-\frac{1}{m}\int_{\partial B_R}vd\sigma \quad & \mbox{on $\partial B_R$}
    \end{cases}
\end{align}
Indeed, $\eqref{vequation}_1$ is easy to see. To see $\eqref{vequation}_2$, first note that by Proposition \ref{radial}, $u$ is radial, and hence the Euclidean hessian of $u$ is given by
\begin{align}
    \label{hessianu}
 \nabla^2 u(x)=u_{rr}\frac{x}{|x|}\otimes\frac{x}{|x|}+\frac{u_r}{r}(I-\frac{x}{|x|}\otimes \frac{x}{|x|}),
\end{align}where $I$ is the identity matrix. Hence given a test function $\phi$, by $\eqref{volumepreserving}_1$, \eqref{evolutionofarea2} and \eqref{hessianu} we have
\begin{align*}
   0=&\frac{d}{dt}\Big|_{t=0} \int_{\partial \Omega_t}\left(\frac{1}{m}\int_{\partial \Omega_t}u(t) d\sigma_t +\frac{\partial u(t)}{\partial \nu }\right)\phi d\sigma_t\\
   =&\int_{\partial B_R} \phi\Big(\frac{1}{m}\big(\int_{\partial B_R}(v+u_r\zeta +uH\zeta ) d\sigma \big)+\frac{\partial v}{\partial \nu}+u_{rr}\zeta\Big)d\sigma\nonumber\\
   & +\int_{\partial B_R}\left(\frac{1}{m}\int_{\partial B_R}u d\sigma +\frac{\partial u}{\partial \nu }\right)(\nabla \phi\cdot \eta+\phi\div_{\partial B_R}\eta) d\sigma\\
   =& \int_{\partial B_R}\phi\left(\frac{1}{m}\int_{\partial B_R}vd\sigma+\frac{\partial v}{\partial \nu}+u_{rr}\zeta\right)d\sigma,
\end{align*}where we have used the tangential divergence theorem on hypersurface, that $u(t)$ is a solution to \eqref{ul}, that $u$ is radial in $B_R$ and that on $\partial B_R$ we have 
\begin{align}
\label{xuyubuli}
    \nabla u \cdot \eta=u_r\zeta
\end{align}and
\begin{align*}
    <(\nabla^2u)\eta, \nu>=\nabla^2u :\eta \otimes \nu=u_{rr}\zeta.
\end{align*} 
Therefore,  $\eqref{vequation}_2$ holds since $\phi$ is arbitrary.

By $\eqref{volumepreserving}_1$  and the compatibility condition for the existence of solutions to \eqref{vequation}, we have 
\begin{align}
\label{stekloffcondition}
    \frac{1}{m}\int_{\partial B_R}v d\sigma =0.
\end{align}Hence by \eqref{vequation}, $v$ satisfies
\begin{align}
    \label{vequation'}
    \begin{cases}
    \Delta v=0 \quad & \mbox{in $B_R$}\\
    \frac{\partial v}{\partial \nu}=-u_{rr}\zeta\quad & \mbox{on $\partial B_R$.}
    \end{cases}
\end{align}
Also, from $\eqref{volumepreserving}_1$ and \eqref{stekloffcondition}, we have
\begin{align}
    \label{derriave}
\frac{d}{dt}\Big |_{t=0}\left(\frac{1}{m}\int_{\partial \Omega_t} u(t) d\sigma_t \right)=0.
\end{align}
Now we do the second variation based on \eqref{firstderivative}. Taking one more derivative, by $\eqref{volumepreserving}_2$, \eqref{evolutionofarea2}, \eqref{evolutionofnormalspeed} and $\eqref{vequation'}_2$ and since $u$ is radial, we have
\begin{align}
\label{1}
    &\frac{d}{dt}\Big|_{t=0}\int_{\partial \Omega_t}\frac{1}{2}|\nabla u(t)|^2 \zeta(t) d\sigma_t\nonumber\\
    =&\int_{\partial B_R}\left( u_rv_r\zeta+u_ru_{rr}\zeta^2+\frac{1}{2}|\nabla u|^2\zeta( \div \eta-\div_{\partial B_R}\eta)+\frac{1}{2}|\nabla u|^2 \zeta\div_{\partial B_R}\eta\right)d\sigma \nonumber\\
    =&\int_{\partial B_R}\left(u_r\zeta (v_r+u_{rr}\zeta)+\frac{1}{2}|\nabla u|^2\zeta \div \eta\right)d\sigma=0,
\end{align}where we have also used that 
\begin{align*}
\nabla u\cdot \nabla v=\partial_{\nu} v \nabla u \cdot \nu=u_rv_r,
\end{align*}
\begin{align*}
    \nabla u  \cdot (\nabla^2 u\,  \eta)=u_r(\nabla^2u:\eta\otimes \nu)=u_ru_{rr}\zeta,
\end{align*}and the formula \eqref{evolutionofnormalspeed}.
Similarly as above, we have 
\begin{align}
\label{2}
    \frac{d}{dt}\Big|_{t=0}\int_{\partial \Omega_t}|\nabla_{\nu} u(t)|^2 \zeta(t) d\sigma_t=0.
\end{align}
\eqref{2} can also be computed by replacing $\partial_{\nu} u(t)$ with $\frac{1}{m}\int_{\partial \Omega_t}u d\sigma_t$, then $\eqref{volumepreserving}_2$ \eqref{evolutionofarea2}, \eqref{evolutionofnormalspeed} and \eqref{derriave} yield
\begin{align*}
    \frac{d}{dt}\Big|_{t=0}\int_{\partial \Omega_t}|\nabla_{\nu} u(t)|^2 \zeta(t) d\sigma_t=\int_{\partial B_R}u_r^2\left( \zeta (\div \eta-\div_{\partial B_R}\eta)+\zeta \div_{\partial B_R}\eta)\right)d\sigma=0.
\end{align*}

Next, by $\eqref{volumepreserving}_2$, \eqref{evolutionofarea2}, \eqref{evolutionofnormalspeed}, \eqref{evoonsphere}, \eqref{xuyubuli}, \eqref{derriave}, and since $u$ is radial, we have
\begin{align}
    \label{3}
&\frac{d}{dt}\Big|_{t=0}\int_{\partial \Omega_t}  \left(\frac{1}{m}\int_{\partial \Omega_t} u(t) d\sigma_t \right)u(t)H\zeta(t) d\sigma_t\nonumber\\
=& \left(\frac{1}{m}\int_{\partial B_R} u d\sigma \right)\int_{\partial B_R}\Big(vH\zeta+u_r\zeta H\zeta+u(-\Delta_{\partial \Omega_t} \zeta-\frac{n-1}{R^2}\zeta)\zeta \nonumber\\
&\quad \quad \quad \quad \quad \quad \quad \quad \quad +uH\zeta( \div \eta -\div_{\partial B_R}\eta)+uH\zeta \div_{\partial B_R}\eta\Big)d\sigma\nonumber\\
=& \left(\frac{1}{m}\int_{\partial B_R} u d\sigma \right)\int_{\partial B_R} (vH\zeta+u_rH\zeta^2)d\sigma\nonumber\\
&+ \left(\frac{1}{m}\int_{\partial B_R} u^2 d\sigma \right)\int_{\partial B_R}\left(|\nabla_{\partial B_R} \zeta |^2-\frac{n-1}{R^2}\zeta^2\right)d\sigma.
\end{align}
Similarly as above, we can compute
\begin{align}
    \label{4}
&\frac{d}{dt}\Big|_{t=0}\int_{\partial \Omega_t} fu(t)\zeta(t) d\sigma\nonumber \\
=&\int_{\partial B_R} \left((\nabla f\cdot \eta) u\zeta+fv\zeta+f\nabla u\cdot \eta\zeta +fu\zeta (\div \eta-\div_{\partial B_R}\eta)+fu\zeta \div_{\partial B_R}\eta\right)d\sigma\nonumber\\
=&\int_{\partial B_R}\left( (f_ru+fu_r)\zeta^2+fv\zeta\right)d\sigma, \quad \mbox{since $\eqref{volumepreserving}_2$.}
\end{align}
Combining \eqref{1}-\eqref{4}, and since $\frac{1}{m}\int_{\partial B_R}u d\sigma=-u_r$, $H=\frac{n-1}{R}$ on $\partial B_R$ and $u_{rr}+\frac{n-1}{R}u_r+f=0$, we have
\begin{align}
\label{guodu2}
    \frac{d^2}{dt^2}\Big|_{t=0}E_m(\Omega_t) =& \int_{\partial B_R} u_{rr}u_r\zeta^2 d\sigma +\int_{\partial B_R}v\zeta u_{rr}d\sigma-\int_{\partial B_R}f_ru\zeta^2d\sigma \nonumber\\
    &+ \left(\frac{1}{m}\int_{\partial B_R} u^2 d\sigma \right)\int_{\partial B_R}\left(|\nabla_{\partial B_R} \zeta |^2-\frac{n-1}{R^2}\zeta^2\right)d\sigma.
\end{align}Hence we have proved \eqref{secondvariationformula}.
\end{proof}

\section{Necessary and Sufficient conditions for stability of ball configurations}
In this section, we still assume that $f \ge 0$ is a radial function and we denote $f(x)$ by $f(r)$, where $r=|x|$. Let $\bar{f}_R=\frac{\int_{B_R} f(x)dx}{|B_R|}$, then we have
\begin{theorem}
\label{classification}
If $f \ge 0$ is radial and satisfies
\begin{align}
    \label{sufnecconditionforf}
(f-\frac{n-1}{n}\bar{f}_R)(f-\bar{f}_R)+f'(R)\bar{f}_R\frac{m}{n^2\omega_nR^{n-1}}\le0 \quad \mbox{on $\partial B_R$,}
\end{align}then $B_R$ is stable along any volume preserving flows. The converse is also true.
\end{theorem}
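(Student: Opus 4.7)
My plan is to diagonalize the quadratic form on the right-hand side of \eqref{secondvariationformula} by expanding $\zeta := \eta \cdot \nu$ into spherical harmonics on $\partial B_R$. Write $\zeta = \sum_{k \ge 0}\zeta_k$ with $-\Delta_{\partial B_R}\zeta_k = \frac{k(k+n-2)}{R^2}\zeta_k$; the infinitesimal volume-preservation condition $\int_{\partial B_R}\zeta\,d\sigma = 0$ kills $\zeta_0$. Since $u$ is radial by Proposition \ref{radial}, $u_{rr}$ is the constant $u_{rr}(R)$ along $\partial B_R$, so the Neumann datum in \eqref{vequation'} is $-u_{rr}(R)\zeta$ and the harmonic extension to $B_R$ reads
\[
v(r,\theta) = -u_{rr}(R)R \sum_{k \ge 1}\frac{(r/R)^k}{k}\,\zeta_k(\theta),
\]
with the free constant pinned to $0$ by the normalization \eqref{stekloffcondition}.

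Using orthogonality of spherical harmonics and the algebraic identity $k(k+n-2) - (n-1) = (k-1)(k+n-1)$, the formula \eqref{secondvariationformula} decouples as
\[
\frac{d^2}{dt^2}\Big|_{t=0}E_m(\Omega_t) = \sum_{k \ge 1}c_k\,G(k),\quad c_k := \int_{\partial B_R}\zeta_k^2\,d\sigma,
\]
where
\[
G(k) := -\frac{u_{rr}(R)^2 R}{k} + u_{rr}(R)u_r(R) - f'(R)u(R) + \frac{(k-1)(k+n-1)}{mR^2}\int_{\partial B_R}u^2\,d\sigma.
\]
A direct inspection yields
\[
G(k) - G(1) = u_{rr}(R)^2 R\left(1 - \frac{1}{k}\right) + \frac{(k-1)(k+n-1)}{mR^2}\int_{\partial B_R}u^2\,d\sigma \ge 0\qquad (k \ge 1),
\]
so $B_R$ is stable along volume-preserving flows if and only if $G(1) \ge 0$.

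To convert $G(1)\ge 0$ into \eqref{sufnecconditionforf}, I use the data from \eqref{ul}. Integrating $\eqref{ul}_1$ over $B_R$ and inserting into $\eqref{ul}_2$ gives $u_r(R) = -R\bar{f}_R/n$; the radial ODE $-u_{rr} - \frac{n-1}{r}u_r = f$ evaluated at $r=R$ then yields $u_{rr}(R) = -(f(R) - \frac{n-1}{n}\bar{f}_R)$, and $\eqref{ul}_2$ produces $u(R) = m\bar{f}_R/(n^2\omega_n R^{n-2})$. The first two terms of $G(1)$ telescope to $-R(f(R)-\frac{n-1}{n}\bar{f}_R)(f(R)-\bar{f}_R)$, and factoring out $-R$ rewrites $G(1)\ge 0$ as exactly \eqref{sufnecconditionforf}. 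For the converse, I test the second variation against $\zeta = a \cdot x/R$ with $a \in \mathbb{R}^n$, a pure first-order spherical harmonic realized by the translation flow $\eta \equiv a$ (trivially volume-preserving); this isolates the $k=1$ mode, so any failure of \eqref{sufnecconditionforf} forces $\frac{d^2}{dt^2}|_{t=0}E_m(\Omega_t) < 0$.

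The main obstacle is the monotonicity $G(k) \ge G(1)$: it is what pins down the translational mode as the worst direction and turns an a priori infinite family of constraints into a single pointwise inequality. This is precisely the point emphasized in the introduction — restricting to normal variations would discard the $k=1$ mode entirely (since translations are never normal to a sphere), and the resulting necessary and sufficient condition would be lost. A secondary bookkeeping task is verifying that every mean-zero smooth $\zeta$ on $\partial B_R$ is attained as $\eta \cdot \nu$ for some smooth volume-preserving flow, which follows from standard divergence-free extension of $\zeta\nu$ into $\mathbb{R}^n$.
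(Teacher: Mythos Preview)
Your proof is correct and arrives at the same conclusion, but the route differs from the paper's in a meaningful way.

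The paper does \emph{not} expand $\zeta$ in spherical harmonics. Instead it argues in two steps: first, it drops the last line of \eqref{secondvariationformula} using the sharp eigenvalue inequality $\int_{\partial B_R}|\nabla_{\partial B_R}\zeta|^2 \ge \frac{n-1}{R^2}\int_{\partial B_R}\zeta^2$ (valid since $\int\zeta=0$); second, it controls the cross term $\int u_{rr}v\zeta$ by combining Cauchy--Schwarz with the sharp Stekloff inequality $\int_{\partial B_R}v^2 \le R\int_{B_R}|\nabla v|^2$ (valid by \eqref{stekloffcondition}) and the equation \eqref{vequation'}, obtaining $-\int u_{rr}v\zeta \le R\,u_{rr}^2\int\zeta^2$. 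This produces the pointwise lower bound \eqref{xiajie}. For the converse the paper, like you, tests against translation, solves for $v$ explicitly, and shows the lower bound is saturated.

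Your spherical-harmonic diagonalization makes the mechanism more transparent: the paper's two inequalities are both sharp precisely on the $k=1$ subspace (linear functions are simultaneously first spherical eigenfunctions and first Stekloff eigenfunctions), which is why the combined bound is attained at translation. Your computation of $G(k)-G(1)\ge 0$ isolates this fact directly and exhibits each mode's contribution separately, which is arguably more informative and yields Corollary \ref{worstcase} as an immediate by-product. The paper's argument, on the other hand, is coordinate-free and would adapt more readily to domains where an explicit eigenbasis is unavailable. One small remark: the ``bookkeeping task'' you mention at the end is unnecessary for your argument, since the sufficiency direction only requires that every volume-preserving flow \emph{produces} a mean-zero $\zeta$, not that every mean-zero $\zeta$ be realized.
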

\begin{proof}
Let $\eta \in C_0^3(\mathbb{R}^n,\mathbb{R}^n)$ be the velocity field of the volume preserving flow starting from $\Omega$.
Since the first eigenvalue of Laplacian on the unit sphere is $(n-1)$, it follows from \eqref{secondvariationformula} that 
\begin{align}
\label{guodu}
    \frac{d^2}{dt^2}\Big|_{t=0}E_m(F_t(B_R))\ge \int_{\partial B_R} u_{rr}u_r\zeta^2 d\sigma +\int_{\partial B_R}v\zeta u_{rr}d\sigma-\int_{\partial B_R}f'(r)u\zeta^2d\sigma,
\end{align}where $u=u_{B_R}$ and $v$ is a solution to \eqref{vequation'}. Since \eqref{stekloffcondition} and that the second Stekloff eigenvalue on $B_R$ is $\frac{1}{R}$, we have that
\begin{align}
    \label{useful}
\int_{\partial B_R}v^2 d\sigma \le R\int_{B_R}|\nabla v|^2\,dx.   
\end{align}Hence
\begin{align*}
   \left( \int_{\partial B_R} v\zeta d\sigma\right)^2 \le& \int_{\partial B_R}v^2 d\sigma \int_{\partial B_R}\zeta ^2 d\sigma \\
   \le& R\int_{B_R}|\nabla v|^2dx \int_{\partial B_R}\zeta ^2 d\sigma \\
   =& R\int_{\partial B_R} vv_rd\sigma \int_{\partial B_R}\zeta^2 d\sigma =-R\int_{\partial B_R}u_{rr}v\zeta d\sigma\int_{\partial B_R}\zeta^2.
\end{align*}
Since \begin{align*}
    -\int_{\partial B_R}u_{rr}v\zeta d\sigma=\int_{\partial B_R}v_rv d\sigma=\int_{B_R}|\nabla v|^2 dx \ge 0,
\end{align*}from the above we have that
\begin{align*}
    -\int_{\partial B_R}u_{rr}v\zeta d\sigma \le R\int_{\partial B_R}u_{rr}^2 \zeta^2 d\sigma.
\end{align*}Hence \eqref{guodu} yields
\begin{align}
\label{keyestimate1}
     \frac{d^2}{dt^2}\Big|_{t=0}E_m(F_t(B_R)) \ge& \int_{\partial B_R}(u_{rr}u_r\zeta^2-Ru_{rr}^2\zeta^2)d\sigma -\int_{\partial B_R}f_ru\zeta^2\nonumber\\
     =& R\int_{\partial B_R}\zeta^2 u_{rr}(f+\frac{n}{R}u_r)d\sigma-\int_{\partial B_R}f_r u\zeta^2 d\sigma.
\end{align} 
Since on $\partial B_R$, we have
\begin{align*}
    u_r=-\frac{\int_{B_R}f dx}{P(B_R)}=-\frac{R}{n}\bar{f}_R,
\end{align*}
\begin{align*}
    u=-\frac{mu_r}{P(B_R)}=\frac{m\int_{B_R}f dx}{P^2(B_R)}=\frac{m\bar{f}_R}{n^2\omega_nR^{n-2}},
\end{align*}and
\begin{align*}
    u_{rr}=-f-\frac{n-1}{R}u_r=-f+\frac{n-1}{n}\bar{f}_R,
\end{align*}
from \eqref{keyestimate1} we have that 
\begin{align}
\label{xiajie}
    \frac{d^2}{dt^2}\Big|_{t=0}E_m(F_t(B_R)) \ge -R\int_{\partial B_R}\left((f-\frac{n-1}{n}\bar{f}_R)(f-\bar{f}_R)+f_r\bar{f}_R\frac{m}{n^2\omega_nR^{n-1}}\right)\zeta^2d\sigma.
\end{align}
Hence if $f$ satisfies \eqref{sufnecconditionforf}, then $ \frac{d^2}{dt^2}\Big|_{t=0}J(u(t),\Omega_t) \ge 0$ and thus $(u_{B_R},B_R)$ is stable.

Conversely, if $(u_{B_R},B_R)$ is stable along any volume preserving flows, then in particular this is true for translations with constant speed. That is, 
\begin{align*}
    \frac{d^2}{dt^2}\Big|_{t=0}E_m(F_t(B_R)) \ge 0,
\end{align*}for $F_t(B_R)=\{x+t\eta:x\in B_R\}$, where $\eta$ is a constant vector field. Let $\eta=(c_1,\cdots, c_n)^T$, thus \begin{align}
    \label{shougen0}
    \zeta=\frac{1}{R}\sum_{i=1}^nc_ix_i.
\end{align}
Hence we can find general solutions to \eqref{vequation'}, which is
\begin{align}
\label{shougen1}
    v(x)=-u_{rr}(R)\sum_{i=1}^nc_ix_i+C.
\end{align}
For the choice of $F_t$, since $\zeta$ is now the first eigenfunction of Laplacian on $\partial B_R$, applying \eqref{secondvariationformula} we have that
\begin{align}
\label{wohewodejiaxiang}
    \frac{d^2}{dt^2}\Big|_{t=0}E_m(F_t(B_R))= \int_{\partial B_R}\left(v\zeta u_{rr}+u_{rr}u_r\zeta^2-f'(r) u\zeta^2\right) d\sigma.
\end{align}
By \eqref{shougen0} and \eqref{shougen1}, we have $v=-Ru_{rr}(R)\zeta+C$ on $\partial B_R$ , and hence from \eqref{wohewodejiaxiang} and $\eqref{volumepreserving}_1$, we have\begin{align}
\label{wohewodejiaxiang2}
    \frac{d^2}{dt^2}\Big|_{t=0}E_m(F_t(B_R))=& \int_{\partial B_R}\left(-Ru_{rr}^2+u_{rr}u_r-f_r u\right)\zeta^2 d\sigma\nonumber\\
    =&R\int_{\partial B_R}\left( u_{rr}(f+\frac{n}{R}u_r)-\frac{1}{R} f_r u\right)\zeta^2 d\sigma,
\end{align}which is the same as the RHS of \eqref{keyestimate1}. Hence by the exact same argument above, we obtain
\begin{align}
\label{yinqizhuyi}
    \frac{d^2}{dt^2}\Big|_{t=0}E_m(F_t(B_R)) = -R\int_{\partial B_R}\left((f-\frac{n-1}{n}\bar{f}_R)(f-\bar{f}_R)+f_r\bar{f}_R\frac{m}{n^2\omega_nR^{n-1}}\right)\zeta^2d\sigma.
\end{align}
Since $\frac{d^2}{dt^2}\Big|_{t=0}E_m(F_t(B_R))  \ge 0$ and $(f-\frac{n-1}{n}\bar{f}_R)(f-\bar{f}_R)+f_r\bar{f}_R\frac{m}{n^2\omega_nR^{n-1}}$ is a constant on $\partial B_R$, necessarily \begin{align*}
(f-\frac{n-1}{n}\bar{f}_R)(f-\bar{f}_R)+f_r\bar{f}_R\frac{m}{n^2\omega_nR^{n-1}}\le0 \quad \mbox{on $\partial B_R$,}
\end{align*}and the proof is finished.
\end{proof}
\begin{remark}
From \eqref{yinqizhuyi}, it is easy to see that if $f \equiv 1$, then $\frac{d^2}{dt^2}\Big|_{t=0}E_m(F_t(B_R))=0$ if $F_t$ is the flow map generated by constant vector field. This makes sense since $E_m(\Omega)$ is translation invariant.
\end{remark}

Now we are ready to prove Theorem \ref{stabilityintro}.
\begin{proof}[Proof of Theorem \ref{stabilityintro}]
The first and second claim in the corollary follows from the criteria \eqref{sufnecconditionforf}, and the fact that on $\partial B_R$, $f> \bar{f}_R$ when $f$ is nondecreasing and not a constant, and $f \le \bar{f}_R$ when $f$ is nonincreasing. The third claim is also true, because the LHS of \eqref{sufnecconditionforf} is a linear function of $m$ on $\partial B_R$, with negative slope when $f'(R)<0$.
\end{proof}

Also, the following corollary is immediate from \eqref{secondvariationformula} and the proof of Theorem \ref{classification}.
\begin{corollary}
\label{worstcase}
Let $f \ge 0$ be a radial function. Consider
\begin{align*}
    \inf\Big\{\frac{\frac{d^2}{dt^2}\Big|_{t=0}E_m(F_t(B_R))}{\int_{\partial B_R}(\eta\cdot\nu)^2d\sigma}: \mbox{$F_t$ is a smooth volume preserving flow map }\Big\}.
\end{align*}
Then for any $m>0, R>0$, the infimum above is always attained only when $F_t$ is translating flow map with constant velocity. 
\end{corollary}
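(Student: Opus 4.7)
The plan is to revisit the chain of inequalities employed in the proof of Theorem \ref{classification} and pinpoint when each becomes an equality. Because the second variation formula \eqref{secondvariationformula} depends on $\eta$ only through the normal trace $\zeta=\eta\cdot\nu$ on $\partial B_R$ (with $v$ uniquely determined from $\zeta$ via \eqref{vequation'} together with the compatibility condition \eqref{stekloffcondition}), the Rayleigh quotient under consideration is really a functional of $\zeta$, subject to the mean-zero constraint $\int_{\partial B_R}\zeta\,d\sigma=0$ coming from volume preservation.

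Three sharp inequalities underlie the bound \eqref{xiajie}. First, the Poincar\'e-type estimate $\int_{\partial B_R}|\nabla_{\partial B_R}\zeta|^2\,d\sigma\ge\frac{n-1}{R^2}\int_{\partial B_R}\zeta^2\,d\sigma$, which discards the last line of \eqref{secondvariationformula}, with equality iff $\zeta$ lies in the first nontrivial spherical harmonic space on $\partial B_R$, namely $\zeta=R^{-1}\sum_i c_i x_i$ for some constants $c_i$. Second, the Cauchy--Schwarz bound $\left(\int_{\partial B_R}v\zeta\,d\sigma\right)^2\le\int_{\partial B_R}v^2\,d\sigma\cdot\int_{\partial B_R}\zeta^2\,d\sigma$, with equality iff $v\propto\zeta$ on $\partial B_R$. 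Third, the Steklov inequality \eqref{useful}, with equality iff $v$ (which has vanishing boundary mean by \eqref{stekloffcondition}) is a first nontrivial Steklov eigenfunction on $B_R$, namely a linear combination of the coordinate functions.

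A Rayleigh quotient attaining the infimum forces equality in every link of the chain. The Poincar\'e condition immediately pins down $\zeta=R^{-1}\sum c_i x_i$ on $\partial B_R$; solving \eqref{vequation'} with this Neumann data then yields the explicit harmonic extension $v(x)=-u_{rr}(R)\sum c_i x_i$ recorded in \eqref{shougen1}, which is manifestly linear and on $\partial B_R$ satisfies $v=-Ru_{rr}(R)\zeta$. Thus the Cauchy--Schwarz and Steklov conditions are satisfied automatically, and the three equality conditions are mutually compatible. The converse direction, that any such $\zeta$ actually attains the infimum, is precisely the explicit computation \eqref{wohewodejiaxiang2}--\eqref{yinqizhuyi} already carried out in the proof of Theorem \ref{classification}.

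The principal step here is recognizing that, among the three sharpness conditions, the Poincar\'e condition is the binding one, while the other two become compatibility checks rather than additional constraints. To conclude, $\zeta=R^{-1}\sum c_i x_i$ is exactly the normal trace on $\partial B_R$ of the constant vector field $\eta=(c_1,\ldots,c_n)$, and the volume-preserving requirement $\int_{\partial B_R}\zeta\,d\sigma=0$ then holds automatically by oddness, so the infimum in the Rayleigh quotient is realized precisely by constant-velocity translating flows, yielding the corollary.
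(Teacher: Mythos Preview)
Your proposal is correct and follows essentially the same approach as the paper, which simply records the corollary as ``immediate from \eqref{secondvariationformula} and the proof of Theorem \ref{classification}.'' You have spelled out that immediacy by isolating the three inequalities (spherical Poincar\'e, Cauchy--Schwarz, Steklov) in the chain leading to \eqref{xiajie}, identifying their equality cases, and checking via \eqref{shougen1} that the Poincar\'e equality condition $\zeta\in\mathrm{span}\{x_i\}$ automatically forces the other two; this is exactly what the paper's reference to the proof of Theorem \ref{classification} is pointing at.
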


\section{Shape derivatives of $\lambda_m(\cdot)$ and the precise value of $m_0$ }
Let $\lambda_m(\cdot)$ be defined as in \eqref{lambdam'}. In this section, we will derive the first and second shape derivatives of $\lambda_m(\cdot)$ at the ball shape. Then based on the variation formulas, we will derive Theorem \ref{asympintro}, as a consequences of the translation invariance property of the $\lambda_m$ functional. 

As before, we consider $\Omega_t:=F_t(\Omega)$, where $F_t(x):=F(t,x)$ is the volume-preserving flow map generated by a smooth velocity field $\eta$. Also, note that if  $m>m_0$, then there is a unique minimizer (up to a constant factor) to \eqref{lambdam'} for $\Omega=B_R$, and the minimizer must be radial and positive in $\bar{B}_R$. Let $u(t)(x):=u_{\Omega_t}(x)$ be a function such that $\int_{\Omega_t}u_{\Omega_t}^2\,dx=1$ and the infimum in \eqref{lambdam'} is attained at $u_{\Omega_t}$ when the domain is $\Omega_t$. Standard argument (see for example \cite{DLW}) can show that $u(t)(F_t(\cdot))$ converges to $u_{B_R}(\cdot)$ in $C^0(\bar{B}_R)$ as $t \rightarrow 0$. Hence $u(t)$ is positive on $\partial \Omega_t$ when $|t|$ small.  Given $m>m_0$, since $\lambda_m<\mu_2(B_R)$, we also have that when $|t|$ is small, $\lambda_m(\Omega_t)<\mu_2(\Omega_t)$, where $\mu_2(\Omega_t)$ is the first nonzero eigenvalue of Neumman Laplacian on $\Omega_t$. Hence by looking at the Euler-Lagrange equation (see \cite{BBN}), we conclude that the function $u_{\Omega_t}$ is unique. Hence elliptic regularity theory and implicit function theorem imply that when $m>m_0$, $u(t)(F_t(x))$ is $C^1$ with respect to $t$ for $|t|$ small, and thus $u(t)$ is $C^1$ with respect to $t$ for $|t|$ small, with
\begin{align}
\label{buchong}
    u'(t)(F_t(x))+\nabla u(t) (F_t(x))\cdot \eta(F_t(x))=\frac{d}{dt}\left(u(t)(F_t(x))\right).
\end{align}
We are now ready to derive the first variation of $\lambda_m(\cdot)$ on ball shape.

\begin{proposition}
\label{kun1}
Let $\Omega=B_R$, $\Omega_t=F_t(\Omega)$, where $F_t$ is the volume preserving flow map generated by a smooth vector field $\eta$. Let $u(t)=u_{\Omega_t}$ explained above, and we denote $u_{\Omega}$ by $u$. We also let
\begin{align*}
    \lambda_m(t)=\lambda_m(\Omega_t),
\end{align*}and $\zeta(t)(x)=\eta(x) \cdot \nu(t)$, where $x \in \partial \Omega_t$ and $\nu(t)$ is the unit outer normal on $\partial \Omega_t$. Let $m_0$ be the number where the symmetry breaking of insulating material occurs along $\partial B_R$, then given $m > m_0$, for $|t|$ small, we have
\begin{align}
    \label{eigenfirstvariation}
\lambda_m'(t)=&\int_{\partial \Omega_t} |\nabla u(t)|^2 \zeta(t) dx -2\int_{\partial \Omega_t} |\frac{\partial u(t)}{\partial \nu}|^2 \zeta(t) d\sigma_t\nonumber\\
&+\frac{2}{m}\left(\int_{\partial \Omega_t} u(t)d\sigma_t\right)\int_{\partial \Omega_t} u(t)H(t) \zeta(t) d\sigma_t-\int_{\partial \Omega_t}\lambda_m(t)u^2(t)\zeta(t) d\sigma_t.
\end{align}
\end{proposition}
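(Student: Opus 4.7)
The plan is to differentiate $\lambda_m(t) = I_1(t) + I_2(t)$ with $I_1(t) := \int_{\Omega_t}|\nabla u(t)|^2\,dx$ and $I_2(t) := \frac{1}{m}\Bigl(\int_{\partial \Omega_t} u(t)\,d\sigma_t\Bigr)^2$, subject to the normalization $\int_{\Omega_t} u(t)^2\,dx = 1$. The crucial structural input is the Euler--Lagrange system
\begin{align*}
-\Delta u(t) = \lambda_m(t) u(t) \text{ in } \Omega_t, \qquad \frac{\partial u(t)}{\partial \nu} = -\frac{1}{m}\int_{\partial \Omega_t} u(t)\,d\sigma_t \text{ on } \partial \Omega_t,
\end{align*}
whose Neumann datum is a \emph{constant} on $\partial \Omega_t$; this constancy is what will drive the cancellation of the unknown shape derivative $u'(t)$ at the end. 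The $C^1$ dependence of $u(t)$ on $t$ is already granted by the implicit function theorem argument recalled immediately above the statement (using $m>m_0$ so that $\lambda_m(\Omega_t)<\mu_2(\Omega_t)$), so the formula \eqref{tibianfen} and its boundary analogue used in Proposition~\ref{firstderivative'} may be applied directly.

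For $I_1'(t)$, applying \eqref{tibianfen} and integrating by parts against the PDE yields
\begin{align*}
I_1'(t) = \int_{\partial \Omega_t} |\nabla u(t)|^2 \zeta(t)\,d\sigma_t + 2\lambda_m(t) \int_{\Omega_t} u(t) u'(t)\,dx + 2 \int_{\partial \Omega_t} \tfrac{\partial u(t)}{\partial \nu} u'(t)\,d\sigma_t.
\end{align*}
Differentiating the normalization $\int_{\Omega_t} u(t)^2\,dx = 1$ via \eqref{tibianfen} gives $2\int u u'\,dx = -\int_{\partial \Omega_t} u^2 \zeta\,d\sigma_t$, which immediately converts the interior term into $-\lambda_m(t)\int_{\partial \Omega_t} u^2 \zeta\,d\sigma_t$, matching the last term of \eqref{eigenfirstvariation}. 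For $I_2'(t)$, the same boundary derivative computation used in the proof of Proposition~\ref{firstderivative'} produces
\begin{align*}
I_2'(t) = \frac{2}{m}\Bigl(\int_{\partial \Omega_t} u(t)\,d\sigma_t\Bigr) \int_{\partial \Omega_t}\Bigl(u'(t) + \tfrac{\partial u(t)}{\partial \nu} \zeta(t) + u(t) H(t) \zeta(t)\Bigr) d\sigma_t.
\end{align*}

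The final step is to add $I_1'$ and $I_2'$. Pulling the constant $\partial u(t)/\partial \nu = -\frac{1}{m}\int u(t)\,d\sigma_t$ out of the boundary integrals shows that the two contributions involving $u'(t)$ exactly cancel, and simultaneously transforms $\frac{2}{m}\bigl(\int u\bigr)\int \tfrac{\partial u}{\partial \nu}\zeta\,d\sigma_t$ into $-2\int |\tfrac{\partial u}{\partial \nu}|^2\zeta\,d\sigma_t$. What remains is precisely \eqref{eigenfirstvariation}. The calculation is structurally parallel to Proposition~\ref{firstderivative'}; I expect the main obstacle to be purely bookkeeping, namely being systematic about invoking constancy of the Neumann datum on the boundary together with the normalization constraint to absorb every occurrence of $u'(t)$.
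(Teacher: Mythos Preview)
Your proposal is correct and follows essentially the same approach as the paper's proof: differentiate the two pieces of the Rayleigh quotient under the constraint $\int_{\Omega_t} u(t)^2\,dx=1$, integrate by parts using the PDE $-\Delta u(t)=\lambda_m(t)u(t)$, and then exploit the constancy of $\partial u(t)/\partial\nu$ on $\partial\Omega_t$ to cancel the $u'(t)$ contributions and rewrite the remaining boundary term as $-2\int|\partial_\nu u|^2\zeta$. The paper executes these same steps in the same order, only slightly more tersely.
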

\begin{proof}
First, let $m>m_0$, and then when $|t|$ is small, $u(t)>0$ on $\partial \Omega_t$. Hence the Euler-Lagrange equation for $u(t)$ is given by
\begin{align}
\label{utequation}
  \begin{cases}
  -\Delta u(t)=\lambda_m(t) u(t) \quad &\mbox{in $\Omega_t$},\\
  \frac{\partial u(t)}{\partial \nu}=-\frac{1}{m}\int_{\partial \Omega_t}u(t) d\sigma_t \quad &\mbox{on $\partial \Omega_t$.}
  \end{cases}
\end{align}
Let $v(t)(x)=u'(t)(x)$. We have
\begin{align*}
    \lambda_m'(t)=&2\int_{\Omega_t}\nabla u(t) \nabla v(t) dx+\int_{\partial \Omega_t} |\nabla u(t)|^2 \zeta(t)d\sigma_t\\
    &+\frac{2}{m}\left(\int_{\partial \Omega_t}ud\sigma_t\right)\int_{\partial \Omega_t}\left(v(t)+\frac{\partial u(t)}{\partial \nu}\zeta(t)+u(t)H\zeta(t)\right)d\sigma_t.
\end{align*}
From \eqref{utequation}, we then have
\begin{align}
\label{yundong0}
    \lambda_m'(t)=&2\lambda_m(t)\int_{\Omega_t}u(t)v(t)dx+\int_{\partial \Omega_t}|\nabla u(t)|^2 \zeta(t) dx\nonumber\\
    &-2\int_{\partial \Omega_t} |\frac{\partial u(t)}{\partial \nu}|^2 \zeta(t) d\sigma_t +\frac{2}{m}\left(\int_{\partial \Omega_t} ud\sigma_t\right)\int_{\partial \Omega_t} u(t)H \zeta(t) d\sigma_t.
\end{align}
Since \begin{align*}
    \int_{\Omega_t} u^2(t)dx\equiv 1,
\end{align*}by taking the derivative we have
\begin{align}
\label{yundong1}
    2\int_{\Omega_t} u(t)v(t)dx+\int_{\partial \Omega_t}u^2(t)\zeta(t) d\sigma_t=0.
\end{align}
Hence combining \eqref{yundong0} and \eqref{yundong1} we have proved \eqref{eigenfirstvariation}.

\end{proof}

\begin{corollary}
\label{stationaryofballeigen}
Let $m_0$ be the number where the symmetry breaking of insulating material occurs along $\partial B_R$. Then for $m > m_0$, $B_R$ is stationary to $\lambda_m(\cdot)$, that is, $\lambda_m'(0)=0$.
\end{corollary}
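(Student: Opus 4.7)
The plan is to specialize the first variation formula \eqref{eigenfirstvariation} to $t=0$, exploit the radial symmetry of the minimizer $u=u_{B_R}$, and use the volume-preserving condition to eliminate each boundary integral.

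First, since $m > m_0$, Theorem \ref{gongming} guarantees that on $B_R$ the optimal insulating distribution is uniform along $\partial B_R$. By the structure of the Euler--Lagrange system \eqref{utequation}, this forces the (unique up to normalization) minimizer $u_{B_R}$ to be radial, with constant normal derivative on $\partial B_R$. Consequently, on $\partial B_R$ every quantity appearing as a coefficient of $\zeta = \eta\cdot\nu$ in \eqref{eigenfirstvariation} is constant in $\sigma$: $|\nabla u|^{2}=u_{r}(R)^{2}$, $|\partial_\nu u|^{2}=u_{r}(R)^{2}$, $u=u(R)$, $H=(n-1)/R$, and $u^2=u(R)^2$, while $\lambda_m(0)$ is of course a scalar.

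Next, setting $t=0$ in \eqref{eigenfirstvariation} and factoring these constants out of each of the four boundary integrals, every term reduces to a constant multiple of $\int_{\partial B_R} \zeta\, d\sigma$. But the first identity in \eqref{volumepreserving} says precisely that $\int_{\partial B_R}\eta\cdot\nu\, d\sigma = 0$ for a volume-preserving flow. Therefore each of the four summands vanishes individually, and $\lambda_m'(0)=0$.

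I expect no serious obstacle here: the only subtlety is making sure that the formula \eqref{eigenfirstvariation} is genuinely applicable at $t=0$, which requires that $u(t)$ be $C^1$ in $t$ with $u(0)=u_{B_R}>0$ on $\partial B_R$. Both of these were already established in the discussion leading into Proposition \ref{kun1} (using $\lambda_m<\mu_2(B_R)$ for $m>m_0$, uniqueness of the minimizer, and the implicit function theorem). Once those facts are in hand, the corollary follows by the one-line symmetry argument above.
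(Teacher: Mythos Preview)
Your proposal is correct and follows essentially the same approach as the paper: specialize the first variation formula \eqref{eigenfirstvariation} to $t=0$, use that $u_{B_R}$ is radial so all coefficients of $\zeta$ are constant on $\partial B_R$, and conclude via the volume-preserving condition $\int_{\partial B_R}\zeta\,d\sigma=0$. The paper's own proof is a one-liner conveying exactly this; your version simply spells out the constants explicitly and flags the differentiability prerequisite, which the paper handled in the discussion preceding Proposition~\ref{kun1}.
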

\begin{proof}
When $m>m_0$, from \eqref{eigenfirstvariation} we know that $\lambda_m'(0)=0$, since $u_{B_R}$ is radial and $F_t$ is volume preserving. Hence $B_R$ is stationary to $\lambda_m(\cdot)$ when $m>m_0$.
\end{proof}

Next, we compute the second shape derivative of the eigenvalue functional on ball shape.
\begin{lemma}
\label{duanqueshengtong}
Let $m> m_0$, then we have
\begin{align}
    \label{eigensecondvariation}
\frac{1}{2}\lambda_m^{''}(0)=\int_{\partial B_R} (u_{rr}v\zeta+u_{rr}u_r\zeta^2)d\sigma+\frac{1}{m}\int_{\partial B_R}u^2 d\sigma\int_{\partial B_R}\left(|\nabla_{\partial B_R}u|^2-\frac{n-1}{R^2}\zeta^2 \right)d\sigma,
\end{align}where $u=u_{B_R}$, $\zeta=\zeta(0)$ and $v=\frac{\partial }{\partial t}\Big|_{t=0} u(t)$.
\end{lemma}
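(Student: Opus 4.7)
The plan is to differentiate the first-variation formula \eqref{eigenfirstvariation} at $t=0$, following the same framework as the proof of Lemma \ref{secondvariation} but with the added complication of the eigenvalue term $-\int_{\partial\Omega_t}\lambda_m(t)u^2(t)\zeta(t)d\sigma_t$. The crucial simplifier throughout will be Corollary \ref{stationaryofballeigen}, namely $\lambda_m'(0)=0$, which kills the eigenvalue-derivative contribution and also simplifies the PDE for $v=u'(0)$.

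The first step is to derive the Euler--Lagrange system for $v=u'(0)$. Differentiating $-\Delta u(t)=\lambda_m(t)u(t)$ at $t=0$ and invoking $\lambda_m'(0)=0$ gives $-\Delta v = \lambda_m v$ in $B_R$. For the boundary condition I would mimic the derivation of \eqref{vequation}: test the identity $\frac{1}{m}\int_{\partial\Omega_t}u(t)d\sigma_t + \partial_\nu u(t)\equiv 0$ against an arbitrary $\phi$, differentiate in $t$, and use that $u$ is radial on $B_R$ together with \eqref{evolutionofarea2}. This yields
\begin{align*}
\partial_\nu v = -u_{rr}\zeta - \frac{1}{m}\int_{\partial B_R} v\,d\sigma \quad \text{on }\partial B_R.
\end{align*}
Unlike the energy case \eqref{vequation'}, the constant $\frac{1}{m}\int v\,d\sigma$ need not vanish, but it is always contracted against $\zeta$ in what follows, and $\int_{\partial B_R}\zeta\,d\sigma=0$ by volume-preservation \eqref{volumepreserving}, so it does no harm.

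Next, I would differentiate each of the four terms in \eqref{eigenfirstvariation} using Proposition \ref{geometricevolution} and the radial identities on $\partial B_R$ (the scalars $u$, $u_r$, $u_{rr}$ are constants there, and $\nabla u\cdot \eta = u_r\zeta$, $\nabla u\cdot(\nabla^2 u)\eta = u_r u_{rr}\zeta$). The $|\nabla u(t)|^2 \zeta(t)$ and $|\partial_\nu u(t)|^2 \zeta(t)$ terms collapse in essentially the same way as in \eqref{1}-\eqref{2} after substituting the boundary condition for $v$ and using $\int\zeta=\int\zeta\div\eta=0$. The $\frac{2}{m}\int u\,d\sigma\cdot\int uH\zeta\,d\sigma$ term is where \eqref{evoonsphere} enters, producing the Stekl\"off-type quantity $\frac{1}{m}\int_{\partial B_R} u^2\,d\sigma \cdot \int_{\partial B_R}(|\nabla_{\partial B_R}\zeta|^2 -\frac{n-1}{R^2}\zeta^2)\,d\sigma$ via exactly the same algebra as in \eqref{3}. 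The last term $-\int_{\partial\Omega_t}\lambda_m(t)u^2(t)\zeta(t)d\sigma_t$ differentiates cleanly: the $\lambda_m'(0)u^2\zeta$ piece drops by Corollary \ref{stationaryofballeigen}, and using the radial identity $u_{rr}+\frac{n-1}{R}u_r = -\lambda_m u$ on $\partial B_R$ (from $-\Delta u=\lambda_m u$ in radial coordinates), the remaining pieces combine with the analogous leftovers from the first three terms to give the $u_{rr}v\zeta + u_{rr}u_r\zeta^2$ contribution.

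The most delicate step will be the careful bookkeeping: each boundary integrand has three parts that vary with $t$ (the integrand, the surface measure, and the normal direction hidden in $\zeta$), and matching terms must cancel in the right combinations. However, because the computation runs parallel to Lemma \ref{secondvariation} step-for-step with $-f$ replaced by $\lambda_m u$, one can follow that template almost verbatim; the only genuinely new ingredient is the direct differentiation of the explicit $\lambda_m(t)$ prefactor, which is neutralized by $\lambda_m'(0)=0$.
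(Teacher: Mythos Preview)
Your proposal is correct and follows essentially the same route as the paper: derive the system \eqref{eigenvequation} for $v$ (noting, as you do, that the constant $\tfrac{1}{m}\int_{\partial B_R}v\,d\sigma$ survives but is annihilated by $\int_{\partial B_R}\zeta=0$), then differentiate the four terms of \eqref{eigenfirstvariation} exactly as in \eqref{1}--\eqref{4}, invoking $\lambda_m'(0)=0$ and finally the radial identity $u_{rr}+\tfrac{n-1}{R}u_r+\lambda_m u=0$ to collapse the leftovers into $u_{rr}v\zeta+u_{rr}u_r\zeta^2$. The paper's computation is organized identically, term by term.
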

\begin{proof}
We first claim that $v$ satisfies the equation
\begin{align}
\label{eigenvequation}
  \begin{cases}
  -\Delta v=\lambda_m v \quad &\mbox{in $B_R$},\\
  \frac{\partial v}{\partial \nu}=-u_{rr}\zeta-\frac{1}{m}\int_{\partial \Omega} vd\sigma \quad &\mbox{on $\partial B_R$.}
  \end{cases}
\end{align}
Indeed, let $\phi \in C^2(\mathbb{R}^n)$, then 
\begin{align*}
    0=&\frac{d}{dt}\Big|_{t=0}\int_{\Omega_t}(\Delta u(t)+\lambda_m(t) u(t))\phi dx\\
    =& \int_{B_R}(\Delta v+\lambda_m v+\lambda_m'(0)u)\phi dx+\int_{\partial B_R}(\Delta u+\lambda_m u)\phi \zeta dx\\
=& \int_{B_R}(\Delta v+\lambda_m v)\phi dx,\quad \mbox{since Corollary \ref{stationaryofballeigen}.}
\end{align*}This proves $\eqref{eigenvequation}_1$. Similarly,
\begin{align*}
    0=&\frac{d}{dt}\Big|_{t=0}\int_{\partial \Omega_t}\left(\frac{\partial u(t)}{\partial \nu}+\frac{1}{m}\int_{\partial \Omega_t} u(t)d\sigma_t\right)\phi dx\\
    =&\int_{\partial B_R}\left(u_{rr}\zeta+v_r+\frac{1}{m}\int_{\partial B_R}(v+u_r\zeta+uH\zeta)d\sigma\right)\phi d\sigma\\
    =&\int_{\partial B_R}\left(u_{rr}\zeta+v_r+\frac{1}{m}\int_{\partial B_R}vd\sigma\right)\phi d\sigma,
\end{align*}where we have used that $u$ is radial in $B_R$, $\eqref{volumepreserving}_1$ and \eqref{evolutionofarea2}. This proves $\eqref{eigenvequation}_2$.

Now we compute the second derivative of $\lambda_m(\cdot)$ based on \eqref{eigenfirstvariation}.
Taking the derivative of \eqref{eigenfirstvariation} with respect to $t$, by \eqref{volumepreserving}, \eqref{evolutionofarea2}, \eqref{evolutionofnormalspeed} and $\eqref{eigenvequation}_2$ and since $u$ is radial, we have
\begin{align}
\label{diyixiang}
    &\frac{d}{dt}\Big|_{t=0}\int_{\partial \Omega_t}|\nabla u(t)|^2 \zeta(t) d\sigma_t\nonumber\\
    =&\int_{\partial B_R}\left(2( u_rv_r\zeta+u_ru_{rr}\zeta^2)+|\nabla u|^2\zeta( \div \eta-\div_{\partial B_R}\eta)+|\nabla u|^2 \zeta\div_{\partial B_R}\eta\right)d\sigma \nonumber\\
    =&\int_{\partial B_R}\left(u_r\zeta \left(-\frac{1}{m}\int_{\partial B_R}vd\sigma\right)+|\nabla u|^2\zeta \div \eta\right)d\sigma=0,
\end{align}where we have also used that 
\begin{align*}
\nabla u\cdot \nabla v=\partial_{\nu} v \nabla u \cdot \nu=u_rv_r,
\end{align*}
\begin{align*}
    \nabla u  \cdot (\nabla^2 u\,  \eta)=u_r(\nabla^2u:\eta\otimes \nu)=u_ru_{rr}\zeta.
\end{align*}
Similarly as above, we have
\begin{align}
\label{dierxiang}
    &\frac{d}{dt}\Big|_{t=0}\int_{\partial \Omega_t}|\nabla_{\nu} u(t)|^2 \zeta(t) d\sigma_t\\
    =&\int_{\partial B_R} \frac{d}{dt}\Big|_{t=0} \left(\frac{1}{m}\int_{\partial \Omega_t} ud\sigma_t\right)^2\zeta d\sigma\nonumber\\
    &+\int_{\partial B_R}u_r^2\left( \zeta (\div \eta-\div_{\partial B_R}\eta)+\zeta \div_{\partial B_R}\eta)\right)d\sigma=0.
\end{align}
Next, using \eqref{evoonsphere} we have
\begin{align}
\label{disanxiang}
    &\frac{d}{dt}\Big|_{t=0}\int_{\partial \Omega_t}  \left(\frac{2}{m}\int_{\partial \Omega_t} u(t) d\sigma_t \right)u(t)H(t)\zeta(t) d\sigma_t\nonumber\\
=& \int_{\partial B_R}\frac{d}{dt}\Big|_{t=0}  \left(\frac{2}{m}\int_{\partial \Omega_t} u(t) d\sigma_t \right)uH\zeta d\sigma\nonumber\\
&+\left(\frac{2}{m}\int_{\partial B_R} u d\sigma \right)\int_{\partial B_R}\Big(vH\zeta+u_r\zeta H\zeta+u(-\Delta_{\partial B_R} \zeta-\frac{n-1}{R^2}\zeta)\zeta\nonumber\\
&\quad \quad \quad \quad\quad \quad \quad\quad \quad +uH\zeta( \div \eta -\div_{\partial B_R}\eta)+uH\zeta \div_{\partial B_R}\eta\Big)d\sigma\nonumber\\
=& \left(\frac{2}{m}\int_{\partial B_R} u d\sigma \right)\int_{\partial B_R} (vH\zeta+u_rH\zeta^2)d\sigma\nonumber\\
&+ \left(\frac{2}{m}\int_{\partial B_R} u^2 d\sigma \right)\int_{\partial B_R}\left(|\nabla_{\partial B_R} \zeta |^2-\frac{n-1}{R^2}\zeta^2\right)d\sigma.
\end{align}
Last, using that $\lambda_m'(0)=0$ and let $\lambda_m=\lambda_m(0)$, we have
\begin{align}
\label{disixiang}
    &\frac{d}{dt}\Big|_{t=0}\int_{\partial \Omega_t}\lambda_m(t)u^2(t)\zeta(t)d\sigma_t\nonumber\\
    =&\int_{\partial B_R}\left(2\lambda_m uv\zeta+2\lambda_muu_r\zeta^2+\lambda_m u^2 \zeta (\div \eta-\div_{\partial B_R}\eta)+\lambda_m u^2 \zeta \div_{\partial B_R}\zeta\right)d\sigma\nonumber\\
    =&2\lambda_m\int_{\partial B_R}(uv\zeta+uu_r\zeta^2)d\sigma.
\end{align}
Combining \eqref{diyixiang}-\eqref{disixiang} and applying the Euler Lagrange equation of $u$, we have
\begin{align*}
    \frac{1}{2}\lambda_m^{''}(0)=&\int_{\partial B_R}\left((-\lambda_m u-u_r H)v\zeta+(-\lambda_muu_r-u_r^2H)\zeta^2\right)d\sigma\nonumber\\
    &+\frac{1}{m}\int_{\partial B_R}u^2d\sigma\int_{\partial B_R}\left(|\nabla_{\partial B_R} u|^2-\frac{n-1}{R^2}\zeta^2\right)d\sigma \nonumber\\
    =&\int_{\partial B_R}(u_{rr}v\zeta+u_{rr}u_r\zeta^2)d\sigma+\frac{1}{m}\int_{\partial B_R}u^2 d\sigma\int_{\partial B_R}\left(|\nabla_{\partial B_R}u|^2-\frac{n-1}{R^2}\zeta^2 \right)d\sigma.
\end{align*}This proves \eqref{eigensecondvariation}.
\end{proof}
Now we are ready to prove \eqref{buxiedai} in Theorem \ref{asympintro} for any dimension $n \ge 2$.

\begin{proof}[Proof of Theorem \ref{asympintro}]
Recall the definition of $\lambda_m(\Omega)$ defined in \eqref{lambdam'}. If $\Omega=B_R$ and $m>m_0$, then $u_{B_R}$ is unique up to a constant factor. As before, we normalize $u_{B_R}$ such that its total integration over $B_R$ is $1$. In the proof, we write $\lambda_m$ as abbreviation of $\lambda_m(B_R)$. As before, we let $F_t$ be a smooth volume-preserving map, and $u(t):=u_{F_t(B_R)}$ be the function such that $\int_{F_t(B_R)}u(t)^2 dx=1$ and the infimum of \eqref{lambdam'} is achieved at $u(t)$ for $\Omega=F_t(B_R)$. For $|t|$ small, we know that $u(t):=u_{F_t(B_R)}$ is strictly positive on $\partial B_R$ and thus satisfies \eqref{utequation}. Let $u=u(0)$ and $v=u'(0)$, and recall that $v$ satisfies \eqref{eigenvequation}. Since \begin{align*}
    \begin{cases}
-\Delta u=\lambda_m u\quad &\mbox{in $B_R$}\\
\frac{\partial u}{\partial \nu}=-\frac{1}{m}\int_{\partial \Omega}u\quad &\mbox{on $\partial B_R$},
\end{cases}
\end{align*}we know that there exists $w$ satisfies
\begin{align}
\label{shengri}
    \begin{cases}
-\Delta w=\lambda_m w\quad &\mbox{in $B_R$}\\
\frac{\partial w}{\partial \nu}=-u_{rr}\zeta \quad &\mbox{on $\partial B_R$}
\end{cases}
\end{align}Using polar coordinates, it is well known (see for example \cite{nshap}) that the solution $w$ has the form
\begin{align}
    \label{vform}
w(r,\theta) =\sum_{s=0}^{\infty}a_{s,i}r^{1-\frac{n}{2}}J_{s+\frac{n}{2}-1}(\sqrt{\lambda_m}r)Y_{s,i}(\theta),   \quad \theta \in S^{n-1}
\end{align}where $s$ are natural numbers, $i=1,2,\dots, d_s$ for $d_s=(2s+n-2)\frac{(s+n-3)!}{s!(n-2)!}$, $J_s$ are Bessel functions, and $Y_{s,i}$ denotes the $i$-th spherical harmonics of order $s$, that is,
\begin{align*}
    \Delta_{S^{n-1}}Y_{s,i}+s(s+n-2)Y_{s,i}=0\quad \mbox{on $S^{n-1}$.}
\end{align*}
In particular, by choosing $\eta$ to be a constant vector field, then $F_t$ is volume-preserving, and $\zeta$ is a linear combination of $Y_{1,i}=x_i,\, i=1,\dots,n$.  WLOG we let $\zeta=x_1$, then by \eqref{shengri}-\eqref{vform}, we can write $w$ as
\begin{align}
    \label{exprv}
w(r,\theta)=a_1r^{1-\frac{n}{2}} J_{\frac{n}{2}}(\sqrt{\lambda_m}r)x_1,   
\end{align}where $a_1$ is a nonzero constant. By $\eqref{shengri}_2$, we have that
\begin{align}
    \label{lianxi}
a_1\left((1-\frac{n}{2})R^{-\frac{n}{2}}J_{\frac{n}{2}}(\sqrt{\lambda_m}R)+R^{1-\frac{n}{2}}\sqrt{\lambda_m}J_{\frac{n}{2}}'(\sqrt{\lambda_m}R)\right) x_1=-u_{rr}(R)x_1.
\end{align}
Since 
\begin{align}
\label{ur}
    u_r(R)=-\frac{1}{m}\int_{\partial B_R}ud\sigma=-\frac{P(B_R)u(R)}{m},
\end{align}from the equation of $u$ we have that
\begin{align}
    \label{urr}
u_{rr}(R)=-\lambda_m u(R)-\frac{n-1}{R}u_r(R)=\left(\frac{n-1}{R}\frac{P(B_R)}{m}-\lambda_m\right)u(R).
\end{align}Hence \eqref{lianxi} and \eqref{urr} lead to
\begin{align}
\label{shenqi}
    a_1\left((1-\frac{n}{2})R^{-\frac{n}{2}}J_{\frac{n}{2}}(\sqrt{\lambda_m}R)+R^{1-\frac{n}{2}}\sqrt{\lambda_m}J_{\frac{n}{2}}'(\sqrt{\lambda_m}R)\right)=\left(\lambda_m-\frac{n-1}{R}\frac{P(B_R)}{m}\right)u(R).
\end{align}
For $\zeta=x_1$, since $w$ is actually of the form $v+Cu$ for some constant $C$, and since $u$ is radial, by $\eqref{volumepreserving}_1$ and \eqref{eigensecondvariation}, we have
\begin{align*}
    \frac{1}{2}\lambda_m^{''}(0)=\int_{\partial B_R}(u_{rr}w\zeta+u_{rr}u_r\zeta^2)d\sigma.
\end{align*}
Since $\lambda_m(\cdot)$ does not depend on translation of the domain, we have $\lambda_m^{''}(0)=0$ for our choice of $F_t$ and $\zeta$. Hence 
\begin{align}
\label{xunsi}
    \int_{\partial B_R}(u_{rr}w\zeta+u_{rr}u_r\zeta^2)d\sigma=0.
\end{align}
By \eqref{exprv}, and \eqref{ur}-\eqref{xunsi}, we have
\begin{align}
\label{yibangongshi}
   0=&\int_{\partial B_R}\left(\frac{n-1}{R}\frac{P(B_R)}{m} -\lambda_m\right)\cdot\nonumber\\
   &\left(\frac{\lambda_m-\frac{n-1}{R}\frac{P(B_R)}{m}}{(1-\frac{n}{2})R^{-\frac{n}{2}}J_{\frac{n}{2}}(\sqrt{\lambda_m}R)+R^{1-\frac{n}{2}}\sqrt{\lambda_m}J_{\frac{n}{2}}'(\sqrt{\lambda_m}R)}R^{1-\frac{n}{2}}J_{\frac{n}{2}}(\sqrt{\lambda_m}R)-\frac{P(B_R)}{m}\right)u^2x_1^2d\sigma.
\end{align}
Since $m\lambda_m$ is strictly increasing, for all $m>m_0$ except at most one point, we have
\begin{align}
\label{shounuo}
    \frac{m\lambda_m-\frac{n-1}{R}P(B_R)}{(1-\frac{n}{2})R^{-\frac{n}{2}}J_{\frac{n}{2}}(\sqrt{\lambda_m}R)+R^{1-\frac{n}{2}}\sqrt{\lambda_m}J_{\frac{n}{2}}'(\sqrt{\lambda_m}R)}R^{1-\frac{n}{2}}J_{\frac{n}{2}}(\sqrt{\lambda_m}R)=P(B_R).
\end{align}
By continuity, \eqref{shounuo} holds for every $m>m_0$. Now we let $m \rightarrow m_0^+$, then $R\sqrt{\lambda_m}$ converges to $R\sqrt{\mu_2(B_R)}=\mu_2(B_1)$, which is the first positive root of the following equation
\begin{align*}
    zJ_{\frac{n}{2}}'(z)-\frac{n-2}{2}J_{\frac{n}{2}}(z)=0.
\end{align*}
Then as $m\rightarrow m_0^+$, the denominator in \eqref{shounuo} goes to $0$, which forces that 
\begin{align*}
    m_0\lambda_{m_0}-\frac{n-1}{R}P(B_R)=0
\end{align*}
Hence $$m_0\lambda_{m_0}=(n-1)n\omega_nR^{n-2}=\frac{n-1}{n}\frac{P^2(B_R)}{|B_R|}.$$
This is exactly \eqref{buxiedai} since $\lambda_{m_0}=\mu_2$.
\end{proof}

\begin{remark}
For $n=2$, \eqref{buxiedai} can also be obtained by letting $m \rightarrow m_0$ in \eqref{shenqi}, without referring to the second shape derivative of $\lambda_m$ at ball shape. 
\end{remark}

The following corollary is immediate from \eqref{buxiedai}.
\begin{corollary}
\label{exactformula}
Let $B_R$ be the disk of radius $R$ in $\mathbb{R}^2$ and $m_0(R)$ be the number where the symmetry breaking of insulating material around $\partial B_R$. Then $m_0(R)$ has the exact formula as
\begin{align}
\label{moR}
    m_0(R)=\frac{n^2}{2}\frac{|B_R|}{\mu_2(B_1)}\approx \frac{2\pi}{3.39}R^2.
\end{align}
\end{corollary}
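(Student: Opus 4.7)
The statement is an immediate consequence of Theorem \ref{asympintro} together with the Neumann scaling law, so the plan is simply to specialize \eqref{buxiedai} to $n=2$ and then solve for $m_0(R)$ by inserting the scaling of $\mu_2(B_R)$.

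First I would rewrite the right-hand side of \eqref{buxiedai} in dimension $n$ using $P(B_R)=n\omega_n R^{n-1}$ and $|B_R|=\omega_n R^n$, obtaining
\begin{equation*}
\frac{n-1}{n}\frac{P^2(B_R)}{|B_R|}=(n-1)n\,\omega_n R^{n-2}.
\end{equation*}
Specializing to $n=2$ collapses this to the numerical value $2\pi$, which is the origin of the parenthetical remark in \eqref{buxiedai}. Hence
\begin{equation*}
m_0(R)\,\mu_2(B_R)=2\pi.
\end{equation*}

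Next I would invoke the standard Neumann scaling $\mu_2(B_R)=\mu_2(B_1)/R^2$ (which follows from rescaling eigenfunctions on the unit disk). Substituting into the display above gives
\begin{equation*}
m_0(R)=\frac{2\pi R^2}{\mu_2(B_1)}=\frac{n^2}{2}\frac{|B_R|}{\mu_2(B_1)},
\end{equation*}
where the last equality uses $n^2/2=2$ and $|B_R|=\pi R^2$ for $n=2$, matching \eqref{moR}.

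For the numerical approximation, I would recall that, in the disk, separation of variables in polar coordinates expresses Neumann eigenfunctions as $J_k(\sqrt{\mu}r)(A\cos k\theta+B\sin k\theta)$, so that $\mu_2(B_1)$ equals the square of the smallest positive zero of some $J_k'$. The smallest such zero occurs for $k=1$, namely $j_{1,1}'\approx 1.8412$, whence $\mu_2(B_1)\approx 3.39$ and the stated approximation $m_0(R)\approx (2\pi/3.39)R^2$ follows. There is no conceptual obstacle here; the only thing to double-check is the identification of $\mu_2(B_1)$ with $(j_{1,1}')^2$ rather than a zero of $J_0'$, which is routine.
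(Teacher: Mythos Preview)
Your proposal is correct and follows exactly the approach the paper takes: the paper simply states that the corollary ``is immediate from \eqref{buxiedai},'' and your proof is precisely the unpacking of that immediacy---specializing \eqref{buxiedai} to $n=2$, applying the scaling $\mu_2(B_R)=\mu_2(B_1)/R^2$, and recognizing $\mu_2(B_1)=(j_{1,1}')^2\approx 3.39$.
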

\begin{remark}
\eqref{moR} is interesting because it says that the symmetry breaking point at which the symmetry of insulating material around $\partial B_R$ breaks, is in fact proportional to the volume of $B_R$, instead of the perimeter of $B_R$.
\end{remark}

The following corollary of Theorem \ref{asympintro} gives another way of understanding the limit of $m\lambda_m$ as $m \rightarrow \infty$.
\begin{corollary}
\begin{align}
\label{4pi}
    \lim_{m\rightarrow \infty}m\lambda_m(B_R)=\frac{P^2(B_R)}{|B_R|}.
\end{align}
\end{corollary}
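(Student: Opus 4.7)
The plan is to exploit equation \eqref{shounuo}, which was established inside the proof of Theorem \ref{asympintro} and, by continuity in $m$, holds for every $m > m_0$. The first step is algebraic: rearranging \eqref{shounuo} to isolate $m\lambda_m$ yields
\begin{equation*}
m\lambda_m \;=\; \frac{n-1}{R}P(B_R) \;+\; P(B_R)\left(\frac{1-\frac{n}{2}}{R} + \frac{\sqrt{\lambda_m}\,J_{\frac{n}{2}}'(\sqrt{\lambda_m}R)}{J_{\frac{n}{2}}(\sqrt{\lambda_m}R)}\right),\qquad m>m_0.
\end{equation*}

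The second step is to verify that $\lambda_m \to 0^+$ as $m\to\infty$. This follows immediately by testing the constant function $u\equiv 1$ in the variational definition \eqref{lambdam'}, which gives $\lambda_m \le P^2(B_R)/(m|B_R|)$; hence $\lambda_m \downarrow 0$ and $z_m := \sqrt{\lambda_m}\,R \to 0^+$.

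The core analytic input is then the small-argument asymptotic
\begin{equation*}
\lim_{z\to 0^+}\frac{z\,J_\nu'(z)}{J_\nu(z)} \;=\; \nu,
\end{equation*}
which is a direct consequence of the power series $J_\nu(z) = (z/2)^\nu/\Gamma(\nu+1) + O(z^{\nu+2})$ together with termwise differentiation. Applied with $\nu = n/2$, this gives $\sqrt{\lambda_m}\,J_{\frac{n}{2}}'(z_m)/J_{\frac{n}{2}}(z_m) \to n/(2R)$, so the bracketed factor in the first display collapses to $1/R$. We therefore conclude
\begin{equation*}
\lim_{m\to\infty} m\lambda_m \;=\; \frac{n-1}{R}P(B_R) + \frac{P(B_R)}{R} \;=\; \frac{nP(B_R)}{R} \;=\; n^2\omega_n R^{n-2} \;=\; \frac{P^2(B_R)}{|B_R|},
\end{equation*}
where the last two equalities use $P(B_R) = n\omega_n R^{n-1}$ and $|B_R| = \omega_n R^n$. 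The only obstacle worth flagging is a clean justification of the Bessel logarithmic-derivative asymptotic near $z=0$; everything else is algebraic bookkeeping.
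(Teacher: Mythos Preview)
Your proof is correct and follows essentially the same route as the paper: both arguments invoke \eqref{shounuo}, note that $\lambda_m\to 0$ as $m\to\infty$, and apply the Bessel asymptotic $\lim_{t\to 0}tJ_s'(t)/J_s(t)=s$ with $s=n/2$ to finish. The only cosmetic difference is that you solve \eqref{shounuo} for $m\lambda_m$ before passing to the limit, whereas the paper divides numerator and denominator by $J_{n/2}(\sqrt{\lambda_m}R)$ after the limit; you also supply the one-line justification $\lambda_m\le P^2(B_R)/(m|B_R|)$ via the constant test function, which the paper leaves implicit.
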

\begin{proof}
As $m\rightarrow \infty$, $\sqrt{\lambda_m}R \rightarrow 0$. Then from the fact that
\begin{align*}
    \lim_{t \rightarrow 0}\frac{tJ_s'(t)}{J_s(t)}=s,
\end{align*}\eqref{shounuo} implies 
\begin{align*}
    \frac{\lim_{m \rightarrow \infty}m\lambda_m(B_R)-\frac{n-1}{R}P(B_R)}{(1-\frac{n}{2})R^{-\frac{n}{2}}+R^{-\frac{n}{2}}\frac{n}{2}}R^{1-\frac{n}{2}}=P(B_R)
\end{align*}
This implies \eqref{4pi}.
\end{proof}

\section{stability of $B_R$ for $m>m_0$ in the eigenvalue problem}
In this section, we will prove that when $n=2$, $B_R$ is a stable solution to $\lambda_m(\cdot)$ for any $m>m_0$, where $m_0=\frac{2\pi R^2}{\mu_2(B_1)}\approx \frac{2\pi}{3.39}R^2$ is the symmetry breaking number for $B_R$, due to Corollary \ref{exactformula}. 

Before proving this, we need the following lemma on Bessel functions.
\begin{lemma}
\label{dandiaoxing}
For a fixed $t\in (0,j_1')$, where $j_1'$ is the first zero of $J_1'$, we have
\begin{align*}
    \frac{d}{ds}\left(\frac{J_s(t)}{tJ_s'(t)}\right)<0.
\end{align*}
\end{lemma}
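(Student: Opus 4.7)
The plan is to pass to the reciprocal. Setting $F(s):=\dfrac{tJ_s'(t)}{J_s(t)}$, the target quantity is $1/F(s)$, and since $\dfrac{d}{ds}\bigl(1/F(s)\bigr)=-F'(s)/F(s)^2$, the lemma is equivalent to showing $F'(s)>0$. For $s\ge 1$ and $t\in(0,j_1')$, both $J_s(t)>0$ (because the first positive zero $j_{s,1}$ of $J_s$ is increasing in $s$, hence $t<j_1'<j_{1,1}\le j_{s,1}$) and $J_s'(t)>0$ (because the first positive zero $j_s'$ of $J_s'$ is increasing in $s$ for $s\ge 1$, hence $t<j_1'\le j_s'$), so $F(s)>0$ and the expression is smooth in $s$.

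I would then invoke the Hadamard factorisation
\[
J_s(z)=\frac{z^s}{2^s\,\Gamma(s+1)}\prod_{k=1}^{\infty}\left(1-\frac{z^2}{j_{s,k}^2}\right),
\]
whose logarithmic derivative in $z$, multiplied by $z$, gives the Mittag--Leffler representation
\[
F(s)=s-\sum_{k=1}^{\infty}\frac{2t^2}{j_{s,k}^2-t^2}.
\]
Every summand is positive since $t<j_{s,1}$ for all admissible $s$, and the series together with its $s$-derivative converges uniformly on compact $s$-intervals by the classical asymptotic $j_{s,k}\sim (k+\tfrac{s}{2}-\tfrac14)\pi$.

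The key analytic input is the strict monotonicity of Bessel zeros in the order, $\partial j_{s,k}/\partial s>0$ for every $k\ge 1$ and $s\ge 0$, a classical fact provable by Sturm comparison applied to Bessel's equation in its Liouville normal form, or by implicit differentiation of $J_s(j_{s,k})=0$ combined with Nicholson-type identities for $\partial_s J_s$. Granting this and differentiating the Mittag--Leffler series termwise, I would obtain
\[
F'(s)=1+\sum_{k=1}^{\infty}\frac{4\,t^2\,j_{s,k}\,\partial_s j_{s,k}}{(j_{s,k}^2-t^2)^2}\;>\;1\;>\;0,
\]
which yields $\dfrac{d}{ds}\bigl(1/F(s)\bigr)<0$, as claimed.

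The main obstacle is providing a clean justification for $\partial_s j_{s,k}>0$ and for the termwise differentiation; both are classical but not instantaneous, so I would likely quote them from Watson's treatise. An alternative, if one wishes to sidestep the Hadamard expansion, is to differentiate Bessel's ODE $t^2u''+tu'+(t^2-s^2)u=0$ in $s$ and analyse $w:=\partial_s J_s$ as the solution of an inhomogeneous Bessel equation, solved by variation of parameters, and then verify directly the sign of $\partial_s F$ on $(0,j_1')$ from the resulting integral representation — this route trades the factorisation for an explicit Green's-function computation but circumvents the need to cite zero-monotonicity as a black box.
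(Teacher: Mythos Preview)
Your argument is correct but follows a genuinely different path from the paper. The paper rewrites the ratio via the Bessel recurrence
\[
\frac{J_s(t)}{tJ_s'(t)}=\frac{1}{s-t\,J_{s+1}(t)/J_s(t)},
\]
and then invokes a result of Landau (\emph{J.\ Math.\ Anal.\ Appl.}\ \textbf{240} (1999), Theorem~2) asserting $t\,\partial_s\bigl(J_s(t)/J_{s+1}(t)\bigr)\ge 2$, which forces the denominator to be strictly increasing in $s$ and hence the displayed quantity to be strictly decreasing. Your route instead expands $F(s)=tJ_s'(t)/J_s(t)$ via the Hadamard product into the Mittag--Leffler series $F(s)=s-\sum_k 2t^2/(j_{s,k}^2-t^2)$ and appeals to the classical monotonicity $\partial_s j_{s,k}>0$ of Bessel zeros in the order. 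Both arguments reduce the lemma to a cited monotonicity fact; yours trades a sharper but less widely known inequality (Landau) for a more standard one (zero monotonicity, available in Watson), at the cost of having to justify termwise differentiation of the infinite product. As a bonus, your computation actually yields the quantitative bound $F'(s)>1$, not merely $F'(s)>0$. One small remark: you restrict to $s\ge 1$ to secure positivity of $J_s(t)$ and $J_s'(t)$ on $(0,j_1')$; the paper's proof is equally tacit on this point, and in the application (Proposition~\ref{stableformlarge}) only integer $s\ge 1$ is needed, so this is harmless.
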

\begin{proof}
Using properties of Bessel functions, we have
\begin{align*}
    \frac{J_s(t)}{tJ_s'(t)}=\frac{J_s(t)}{t\left(-J_{s+1}+\frac{sJ_s(t)}{t}\right)}=\frac{1}{-t\frac{J_{s+1}(t)}{J_s(t)}+s}.
\end{align*}
By \cite[Theorem 2]{Landau}, 
\begin{align*}
    t\frac{d}{ds}\left(\frac{J_s(t)}{J_{s+1}(t)}\right) \ge 2.
\end{align*}
Hence fixing $t \in (0,j_1')$, $\frac{J_s(t)}{J_{s+1}(t)}$ is a strictly increasing function with respect to $s$, and hence $ \frac{J_s(t)}{tJ_s'(t)}$ is a strictly decreasing function with respect to $s$.
\end{proof}

Now we prove can finish the proof of Theorem \ref{asympintro}.
\begin{proposition}
\label{stableformlarge}
Let $n=2$, then $B_R$ is stable to $\lambda_m(\cdot)$ for any $m>m_0$.
\end{proposition}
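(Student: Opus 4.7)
The plan is to perform a Fourier decomposition on $\partial B_R$ in $n=2$, expand the solution $v$ of \eqref{eigenvequation} in a Bessel series, and evaluate the second-variation formula \eqref{eigensecondvariation} mode by mode. Translation invariance from Section 5 pins down the $k=1$ contribution to zero, while Lemma~\ref{dandiaoxing} and the positivity of the spherical-Laplacian term take care of every $k\ge 2$ mode.

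First, given a smooth volume-preserving flow with generator $\eta$, set $\zeta = \eta\cdot\nu$ on $\partial B_R$. Because $\int_{\partial B_R}\zeta\,d\sigma = 0$, the Fourier series of $\zeta$ on the circle of radius $R$ has no constant term:
\[
\zeta(\theta) = \sum_{k\ge 1}\bigl(a_k\cos(k\theta) + b_k\sin(k\theta)\bigr).
\]
Using the same argument that produced \eqref{vform} (but now in the interior rather than on a boundary problem with homogeneous RHS), seek a solution to \eqref{eigenvequation} in the form
\[
v(r,\theta) = \sum_{k\ge 1} c_k J_k\!\bigl(\sqrt{\lambda_m}\,r\bigr)\bigl(a_k\cos(k\theta)+b_k\sin(k\theta)\bigr) + Cu(r),
\]
where the term $Cu$ is free (any multiple of $u$ solves the homogeneous eigenequation and contributes nothing to $\lambda_m''(0)$, because $\int_{\partial B_R}\zeta\,d\sigma = 0$). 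The Neumann condition $\partial_\nu v = -u_{rr}(R)\zeta$ (the integral term in \eqref{eigenvequation}$_2$ only affects the $k=0$ mode, which can be absorbed into $Cu$) fixes
\[
c_k = -\frac{u_{rr}(R)}{\sqrt{\lambda_m}\,J_k'(\sqrt{\lambda_m}\,R)},\qquad k\ge 1.
\]

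Next, substitute into \eqref{eigensecondvariation} and use orthogonality of the trigonometric basis, together with the fact that on $\partial B_R$ the quantities $u(R)$, $u_r(R)$, $u_{rr}(R)$ are constants. A direct computation (replacing $|\nabla_{\partial B_R}u|^2$ by $|\nabla_{\partial B_R}\zeta|^2$ as indicated by the derivation leading to \eqref{disanxiang}) yields
\[
\tfrac12\lambda_m''(0) = \pi R\sum_{k\ge 1}(a_k^2+b_k^2)\,\Psi_k,
\]
where
\[
\Psi_k := -\frac{u_{rr}^2(R)\,J_k(\sqrt{\lambda_m}R)}{\sqrt{\lambda_m}\,J_k'(\sqrt{\lambda_m}R)} + u_{rr}(R)u_r(R) + \frac{k^2-1}{m R^2}\int_{\partial B_R} u^2\,d\sigma.
\]
So stability reduces to showing $\Psi_k\ge 0$ for every $k\ge 1$.

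For $k=1$ the third summand is $0$, and I claim $\Psi_1 = 0$. This is exactly the content of the translation-invariance identity \eqref{shounuo} specialized to $n=2$: substituting $u_r(R) = -P(B_R)u(R)/m$ and $u_{rr}(R) = (P(B_R)/(mR) - \lambda_m)u(R)$ into $\Psi_1$ and invoking \eqref{shounuo} (which gives $\frac{(\lambda_m - P/(mR))J_1}{\sqrt{\lambda_m}J_1'} = P/m$) makes $\Psi_1$ collapse to zero. For $k\ge 2$, since $m > m_0$ we have $\lambda_m < \mu_2(B_R)$, and in $n=2$ it is classical that $\sqrt{\mu_2(B_R)}\,R = j_1'$, so $t:=\sqrt{\lambda_m}R\in(0,j_1')$. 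Lemma~\ref{dandiaoxing} then gives $J_k(t)/(tJ_k'(t)) < J_1(t)/(tJ_1'(t))$; multiplying by $-u_{rr}^2(R)\le 0$ reverses the inequality and yields
\[
-\frac{u_{rr}^2(R)\,J_k(t)}{\sqrt{\lambda_m}\,J_k'(t)} \;\ge\; -\frac{u_{rr}^2(R)\,J_1(t)}{\sqrt{\lambda_m}\,J_1'(t)},
\]
so the first two terms of $\Psi_k$ are already $\ge \Psi_1 = 0$. The third term $(k^2-1)/(mR^2)\cdot\int u^2\,d\sigma$ is strictly positive for $k\ge 2$. Hence $\Psi_k > 0$ for $k\ge 2$, and $\lambda_m''(0)\ge 0$, proving stability.

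The main obstacle I anticipate is verifying in detail that the $k=1$ block really cancels, i.e.\ that $\Psi_1 = 0$ is equivalent to the relation \eqref{shounuo} recovered in Section 5; the rest is bookkeeping, but this cancellation is what makes the proof work and uses essentially all the information extracted from Theorem~\ref{asympintro}. A secondary technical point is checking convergence of the Bessel/Fourier series expansion and the legitimacy of integrating term-by-term, which follows from smoothness of $\eta$ and standard regularity of solutions to \eqref{eigenvequation}.
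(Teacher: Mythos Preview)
Your proposal is correct and follows essentially the same route as the paper: Fourier-expand $\zeta$, solve \eqref{shengri} mode by mode via Bessel functions, use the translation-invariance identity \eqref{shounuo} (in $n=2$) to kill the $k=1$ mode, and invoke Lemma~\ref{dandiaoxing} to handle $k\ge2$. The only cosmetic difference is that you package all three contributions into a single coefficient $\Psi_k$ and argue $\Psi_k\ge\Psi_1=0$, whereas the paper keeps the spherical-Laplacian term $\int(|\nabla_{\partial B_R}\zeta|^2-\tfrac{1}{R^2}\zeta^2)\,d\sigma$ separate (it is nonnegative by the Poincar\'e inequality on the circle) and shows the remaining sum has nonnegative coefficients $\tfrac{R}{m}(\tfrac{2\pi}{m}-\lambda_m)f_s$ with $f_s\le f_1=0$; the two arguments are equivalent. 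You also correctly flagged the typo in \eqref{eigensecondvariation}: the term should be $|\nabla_{\partial B_R}\zeta|^2$, not $|\nabla_{\partial B_R}u|^2$, as is clear from \eqref{disanxiang}.
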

\begin{proof}
Let $F_t$ be the volume-preserving map generated by a smooth vector field $\eta$, and let $\zeta=\eta \cdot \nu$ on $\partial B_R$. Using previous notations, recall that we have proved in Lemma \ref{duanqueshengtong} the following second variation formula \begin{align}
\label{shijianguanli}
\frac{1}{2}\lambda_m^{''}(0)=\int_{\partial B_R} (u_{rr}v\zeta+u_{rr}u_r\zeta^2)d\sigma+\frac{1}{m}\int_{\partial B_R}u^2 d\sigma\int_{\partial B_R}\left(|\nabla_{\partial B_R}u|^2-\frac{n-1}{R^2}\zeta^2 \right)d\sigma.
\end{align}
Given such $\zeta$, by the proof of Theorem \ref{asympintro}, there is a solution $w$ to \eqref{shengri}. As in the proof, $w$ is actually obtained as $v+Cu$ for some constant $C$. Also, since $\lambda_m(B_R)<\mu_2(B_R)$ as $m$ large, such solution $w$ is unique. From \eqref{shijianguanli}, and since $\eqref{volumepreserving}_1$ and $u$ is radial, we have
\begin{align}
    \label{eigensecondvariationw}
\frac{1}{2}\lambda_m^{''}(0)=\int_{\partial B_R} (u_{rr}w\zeta+u_{rr}u_r\zeta^2)d\sigma+\frac{1}{m}\int_{\partial B_R}u^2 d\sigma\int_{\partial B_R}\left(|\nabla_{\partial B_R}u|^2-\frac{n-1}{R^2}\zeta^2 \right)d\sigma.
\end{align}
Again by $\eqref{volumepreserving}_1$, we may write the Fourier series of $\zeta$ on $\partial B_R$ as
\begin{align}
\label{fourierseriesforzeta}
    \zeta=\sum_{s\ge 1}(c_s\cos s\theta+d_s\sin s\theta).
\end{align}
Hence from the equation of $w$, and by writing $w$ in terms of polar form, similarly as before we can obtain
\begin{align*}
    w(r,\theta)=&-u_{rr}(R)\sum_{s \ge 1}\frac{J_s(\sqrt{\lambda_m}r)}{\sqrt{\lambda_m}J_s'(\sqrt{\lambda_m}r)}(c_s\cos s\theta+d_s\sin s\theta)\\
    =&u(R)\sum_{s\ge1}\frac{m\lambda_m-2\pi}{m\sqrt{\lambda_m}J_s'(\sqrt{\lambda_m}R)}J_s(\sqrt{\lambda_m}r)(c_s\cos s\theta+d_s\sin s\theta),
\end{align*}where the last equality is from \eqref{urr}.
Substituting $w$ given by the above, $u_r(R)=-2\pi Ru(R)/m$, $u_{rr}(R)$ given by \eqref{urr} and $\zeta$ given by \eqref{fourierseriesforzeta} into the second variation formula \eqref{eigensecondvariationw}, direct computation yields
\begin{align*}
    \frac{1}{2}\lambda_m^{''}(0)=&\frac{R}{m}(\frac{2\pi}{m}-\lambda_m)\pi u^2(R)\sum_{s\ge1}f_s(c_s^2+d_s^2)\nonumber\\
    &+\frac{1}{m}\int_{\partial B_R}u^2 d\sigma\int_{\partial B_R}\left(|\nabla_{\partial B_R}u|^2-\frac{n-1}{R^2}\zeta^2 \right)d\sigma,
\end{align*}where
\begin{align}
    \label{fs}
f_s=\frac{m\lambda_m-2\pi}{(\sqrt{\lambda_m}R)J_s'(\sqrt{\lambda_m}R)}J_s(\sqrt{\lambda_m}R)-2\pi.
\end{align}
Note that $f_s=0$ for $s=1$, which is exactly \eqref{shounuo} for $n=2$. Hence from here and by Lemma \ref{dandiaoxing}, we have that when $m>m_0$ and $s \ge 2$, $f_s<f_1=0$.  Since $\frac{2\pi}{m}-\lambda_m<0$ when $m>m_0$, due to \eqref{buxiedai} in two dimensions and the fact that $m\lambda_m$ is strictly increasing, we therefore have that $\lambda^{''}(0) \ge 0$. This really says that $B_R$ is stable when $m>m_0$.
\end{proof}

\end{document}